\theoremstyle{plain}
\newtheorem{thm}{Theorem}[section]
\newtheorem{prop}[thm]{Proposition}
\newtheorem{lemma}[thm]{Lemma}
\newtheorem{cor}[thm]{Corollary}
\theoremstyle{definition}
\theoremstyle{remark}
\newtheorem*{rmk}{Remark}
\let\@@pmod\pmod
\DeclareRobustCommand{\pmod}{\@ifstar\@pmods\@@pmod}
\def\@pmods#1{\mkern4mu({\operator@font mod}\mkern 6mu#1)}
\numberwithin{equation}{section}
\setlist{nosep}
\setlist{noitemsep}
\lstdefinelanguage{Sage}[]{Python}
{morekeywords={False,sage,True},sensitive=true}
\newcommand{\C}{\mathbb{C}}
\renewcommand{\H}{\mathbb{H}}
\newcommand{\Z}{\mathbb{Z}}
\newcommand{\Q}{\mathbb{Q}}
\newcommand{\N}{\mathbb{N}}
\newcommand{\R}{\mathbb{R}}
\newcommand{\slz}{{\text {\rm SL}}_2(\mathbb{Z})}
\DeclarePairedDelimiter\floor{\lfloor}{\rfloor}
\newcommand{\F}{\mathbb{F}}
\title{On the Detection of Non-Roots of D'Arcais Polynomials}
\author{Bernhard Heim}
\author{Johann Stumpenhusen}
\address{Department of Mathematics and Computer Science, Division of Mathematics, University of Cologne, Weyertal 86--90, 50931 Cologne, Germany}
\email{bheim@uni-koeln.de}
\email{jstumpen@math.uni-koeln.de}
\date{\today}
\subjclass{Primary 05A17, 11R09; Secondary 05A20, 11R04.}
\keywords{Algebraic number theory, Dedekind eta function, Generating functions, Recurrence relations.}
\begin{document}

\maketitle

\begin{abstract}
    The Lehmer conjecture states that the non-constant Fourier coefficients of the 24th power of the Dedekind eta function are non-zero. In a recent preprint, Neuhauser and the first author exploited an easily accessible tool from algebraic number theory, namely the Dedekind--Kummer Theorem, to prove the non-vanishing of the Fourier coefficients of certain powers of the Dedekind eta function at roots of unity. We extend the application of this method to enlarge the scope of non-roots of the related D'Arcais polynomials.
\end{abstract}

\section{Introduction and statement of results}

\subsection{D'Arcais polynomials.} Let
\[\eta(\tau) := q^{\frac{1}{24}}\prod_{n = 1}^\infty (1 - q^n), \qquad q := e^{2\pi i \tau}, \tau \in \H,\]
be the Dedekind eta function which appears as a critical object in combinatorics and number theory as it is related to the reciprocal of the generating function of the partition function. Since $\eta(\tau)$ is a modular form of weight $\frac{1}{2}$ for $\slz$, it admits a Fourier expansion. The vanishing properties of the Fourier coefficients of the powers $\eta(\tau)^{-X}$ where $X$ is usually assumed to be a rational integer\footnote{By the term \textit{rational} integer, we refer to $\Z$ in particular as opposed to a general algebraic integer.} have been of great interest, dating back to Euler and Jacobi investigating pentagonal and triangular numbers  (see Koehler \cite{Koehler}, Ono \cite{On03}). In fact, we can rewrite the infinite product as
\begin{equation*}\label{eq:DefD'ArcaisPolynomials}
    \prod_{n = 1}^\infty (1 - q^n)^{-X} = \exp \left(X \sum_{n = 1}^\infty \sigma(n) \frac{q^n}{n}\right) =: \sum_{n = 0}^\infty P_n^\sigma(X)q^n
\end{equation*}
where $\sigma(n) := \sum_{d \mid n} d$ is the usual sum-of-divisors-function. The polynomials $P_n^\sigma(X)$ are called \textit{D'Arcais polynomials} (or \textit{Nekrasov}--\textit{Okounkov} polynomials in combinatorics) and the main interest of this paper as $P_n^\sigma(X)$ equals the $(n+\frac{X}{24})$-th Fourier coefficient of $\eta(\tau)^{-X}$. The Lehmer conjecture \cite{Lehmer} states that $P_n^\sigma(X)$ never vanishes for $X = -24$.

Whereas determining roots of the D'Arcais polynomials remains a more than challenging task, there has been some progress on detecting non-roots. Kostant \cite{Kostant} proved that $P_n^\sigma(-m^2+1)$ does not vanish for $m \geq n$ using the theory of simple Lie algebras. Recently, Neuhauser and the first author \cite{HeimNeu24} improved a result by Han \cite{Han10} from 2010: If $c := 9.7226$ and $z \in \C$ such that $|z| > c(n-1)$, then $P_n^\sigma(z) \neq 0$.

The superscript $\sigma$ already teased a generalization. Replacing $\sigma$ by a $\Z$-valued arithmetic function $g: \N \longrightarrow \Z$ with $g(1) = 1$, we instead obtain
\[\sum_{n = 0}^\infty P_n^g(X)q^n := \prod_{n = 1}^\infty (1 - q^n)^{-f_g(n)X} = \exp \left(X \sum_{n = 1}^\infty g(n) \frac{q^n}{n}\right)\]
where $f_g(n) := \frac{1}{n} \sum_{d \mid n}\mu(d)g\left(\frac{n}{d}\right)$ is a rescaled convolution of $g$ with the usual M\"obius function $\mu$. Notice that, by M\"obius inversion, we have $f_\sigma(n) = 1$ all $n$. Choosing $g(n)$ to be the number of points on a smooth projective variety over the unique extension of degree $n$ over a finite field, this construction is connected to the Weil conjectures (see \cite{Murty}*{Subchapter 2.1}).

\subsection{Previous work} Let $\zeta_m$ be a primitive $m$-th root of unity and $\Phi_m(X)$ be its minimal polynomial, the $m$-th cyclotomic polynomial. Luca, Neuhauser, and the first author used local obstructions modulo a prime number $p$ to detect non-roots of $P_n^\sigma(X)$ at $\zeta_m$ for $m \geq 3$.

\begin{thm}[Heim--Luca--Neuhauser \cite{HeimLucaNeu}*{Theorem 1}]
    There is no pair of positive rational integers $(m,n)$ with $m \geq 3$ such that $\Phi_m(X) \mid P_n^\sigma(X)$.
\end{thm}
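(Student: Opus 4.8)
The plan is to transfer the divisibility $\Phi_m(X)\mid P_n^\sigma(X)$ into the reduction of an explicit monic integer polynomial modulo a well-chosen prime $p$, and then to derive a contradiction from the factorization type of $\Phi_m$ over $\F_p$ provided by the Dedekind--Kummer theorem. First I would clear denominators: only the partition $1^n$ contributes to the top degree of $P_n^\sigma(X)$, so its leading coefficient is $1/n!$ and $A_n(X):=n!\,P_n^\sigma(X)$ is a monic polynomial of degree $n$ in $\Z[X]$. As $\Phi_m$ is monic, $\Phi_m(X)\mid P_n^\sigma(X)$ in $\Q[X]$ is equivalent to $\Phi_m(X)\mid A_n(X)$ in $\Z[X]$, hence forces $\overline{\Phi_m}\mid\overline{A_n}$ in $\F_p[X]$ for \emph{every} prime $p$; and one may assume $n\ge\varphi(m)=\deg\Phi_m$, the opposite case being vacuous.

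The technical core is a congruence for $A_n$ modulo a prime $p\le n$, a product analogue of the classical identity $\prod_{a=0}^{p-1}(X+a)\equiv X^{p}-X\pmod p$. From the exponential formula applied to $\prod_{k\ge1}(1-q^k)^{-X}=\exp\!\big(X\sum_{k\ge1}\sigma(k)q^k/k\big)$ one obtains
\[
A_n(X)=\sum_{\pi\in S_n}X^{c(\pi)}\prod_{C}\sigma(\lvert C\rvert),
\]
where $C$ runs over the cycles of the permutation $\pi$ and $c(\pi)$ is their number. Grouping the permutations by cycle type $\lambda\vdash n$, the multiplicity $n!/z_\lambda$ (with $z_\lambda=\prod_i i^{m_i}m_i!$, $m_i$ the number of parts of $\lambda$ equal to $i$) is divisible by $p$ unless $v_p(z_\lambda)=v_p(n!)$. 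A careful analysis of the cycle types admissible in this sense, together with an evaluation of the surviving coefficients (via Wilson's theorem and Lucas' theorem on the multinomials that occur), yields
\[
A_n(X)\ \equiv\ (X^{p}-X)^{\lfloor n/p\rfloor}\cdot r!\,P_r^\sigma(X)\pmod p,\qquad r:=n\bmod p .
\]
The decisive features are that $X^{p}-X=\prod_{a\in\F_p}(X-a)$ is a product of distinct \emph{linear} factors over $\F_p$, and that the cofactor $r!\,P_r^\sigma(X)$ is monic of degree $r<p$.

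Now suppose $\Phi_m(X)\mid P_n^\sigma(X)$, and let $f$ be the multiplicative order of $p$ modulo $m$. Choose a prime $p$ with $p\nmid m$, with $f\ge2$, and with $r=n\bmod p$ small enough that $\overline{r!\,P_r^\sigma}$ has no irreducible factor of degree $f$ over $\F_p$; since $P_r^\sigma$ has only linear factors over $\Q$ for $0\le r\le4$, some prime $p\le5$ not dividing $m$ already does this whenever $30\nmid m$, and the remaining moduli are treated in the same way with a somewhat larger prime. By the Dedekind--Kummer theorem applied to $\Z[\zeta_m]=\mathcal{O}_{\Q(\zeta_m)}$, the prime $p$ is unramified with residue degree $f$, so the minimal polynomial over $\F_p$ of the reduction $\bar\zeta$ of $\zeta_m$ modulo a prime $\mathfrak p\mid p$ is an irreducible factor of $\overline{\Phi_m}$ of degree $f\ge2$; in particular $\bar\zeta\notin\F_p$. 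From $\Phi_m\mid A_n$ we get $\overline{A_n}(\bar\zeta)=0$, and since $\bar\zeta^{p}\ne\bar\zeta$ the congruence forces $\overline{r!\,P_r^\sigma}(\bar\zeta)=0$; but $\bar\zeta$ has degree $f$ over $\F_p$ while $\overline{r!\,P_r^\sigma}$ is a non-zero polynomial over $\F_p$ with no factor of degree $f$, a contradiction.

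I expect the main obstacle to be the proof of the congruence in the second step — in particular the precise description of the $p$-admissible cycle types and the bookkeeping of their coefficients — followed by the verification that a prime as in the third step exists for \emph{every} $m\ge3$: when $m$ is divisible by all small primes one must choose $p\ge7$ with $f$ large relative to the degrees of the irreducible factors of $P_r^\sigma$ for $r<p$, and a short list of small pairs $(m,n)$ may require direct checking.
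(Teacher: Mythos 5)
The paper itself does not prove this statement (it is quoted from Heim--Luca--Neuhauser), but your strategy is exactly the one behind the cited proof and the framework this paper recalls in Section 2: reduce $A_n^\sigma=n!P_n^\sigma$ modulo a prime $p\nmid m$, use $A_{\ell p+r}^\sigma\equiv (X^p-X)^{\ell}A_r^\sigma \pmod p$ (this is precisely Lemma 2.3 combined with $\sigma(p)\equiv 1\pmod p$; you do not need the cycle-type bookkeeping you flag as the main obstacle, since the recursion for $A_n^g$ yields it), and play the residue degree $f=\mathrm{ord}_m(p)$ of the irreducible factors of $\Phi_m$ modulo $p$ against the degrees of the irreducible factors of $A_r$ modulo $p$, $r=n\bmod p$. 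That part of your argument is sound: for $p\in\{2,3,5\}$ the polynomials $A_r$ with $r<p$ split into linear factors over $\Z$, and $f\geq 2$ holds whenever $m\nmid p-1$, so your claim that some $p\leq 5$ succeeds whenever $30\nmid m$ is correct (with $m=4$ handled by $p=3$).

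The genuine gap is the case $30\mid m$, which you dismiss with ``a somewhat larger prime'' and ``a short list of small pairs'': this is exactly where the substance of the original proof (and of \.{Z}mija's hypotheses (1)--(3) in Theorem \ref{thm:Zmija}) lies, and it is not a finite check of pairs $(m,n)$. Note also that $r=n\bmod p$ is forced by $n$ and $p$, not ``chosen small'', so for a fixed $m$ you must exhibit one prime $p$ whose residue degree $f$ avoids the factor degrees of $A_r$ modulo $p$ for \emph{every} $r<p$ simultaneously. The gap can be filled, but it needs two ingredients you do not supply: (i) the explicit factorizations modulo the chosen prime, e.g.\ modulo $7$ the nonlinear irreducible factors occurring in $A_r$ for $r\leq 6$ have degrees $2$ and $3$ only; and (ii) a lower bound on $f$ extracted from the small primes dividing $m$, e.g.\ if $5\mid m$ and $7\nmid m$ then $\mathrm{ord}_m(7)$ is divisible by $\mathrm{ord}_5(7)=4$, hence exceeds $3$, so $p=7$ settles every $m$ divisible by $30$ but not by $7$; a further round with $p=11$ (or $13$), together with the corresponding factorization data for $r\leq 10$, is required when $210\mid m$. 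Until these verifications are carried out, your argument only covers $30\nmid m$.
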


Shortly thereafter, \.{Z}mija discovered that this result is not tied to the case $g = \sigma$ but rather to the local behavior of $P_n^g(X)$ and generalized it.

\begin{thm}[\.{Z}mija \cite{Zmija}*{Theorem 1}]\label{thm:Zmija}
    Let $g$ be an arithmetic function such that $g(1) = 1$ and for every $n$, the polynomial $P_n^g(X)$ takes rational integer values at rational integer arguments. Assume moreover that if $A_n^g(X) := n!P_n^g(X)$, then
    \begin{enumerate}
        \item modulo $5$: none of the polynomials $A_3^g(X)$ and $A_4^g(X)$ is divisible by a monic irreducible polynomial of degree $2$ over $\F_5$.
        \item modulo $7$: none of the polynomials $A_r^g(X)$ for $2 \leq r \leq 6$ is divisible by a monic irreducible polynomial of degree $4$ over $\F_7$.
        \item modulo $11$: none of the polynomials $A_r^g(X)$ for $2 \leq r \leq 10$ is divisible by a monic irreducible polynomial over $\F_{11}$ that divides $X^{11^6 - 1} - 1$ and does not divide $X^{11^d- 1}- 1$ for $1 \leq d \leq 10, d \neq 6$.
    \end{enumerate}
    Then $P_n^g(\zeta_m) \neq 0$ for all $m \geq 3$.
\end{thm}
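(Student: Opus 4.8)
The plan is to run the local-obstruction method behind the Heim--Luca--Neuhauser theorem recalled above, but this time keeping precise track of how much of the factorisation of $A_n^g$ modulo a small prime is already forced by the first few polynomials $A_2^g,\dots,A_{p-1}^g$.

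First I would reduce to a divisibility over $\Z$ and then over $\F_p$. Suppose for contradiction that $P_n^g(\zeta_m)=0$ for some $m\ge 3$, i.e.\ $\Phi_m(X)\mid P_n^g(X)$ in $\Q[X]$. Since $P_n^g$ is integer valued with leading coefficient $g(1)^n/n!=1/n!$, the polynomial $A_n^g(X)=n!\,P_n^g(X)$ is monic of degree $n$ and has integer coefficients (as $n!\binom Xk\in\Z[X]$ for $k\le n$), so Gauss's lemma promotes the divisibility to $\Phi_m\mid A_n^g$ in $\Z[X]$, hence $\overline{\Phi_m}\mid\overline{A_n^g}$ in $\F_p[X]$ for every prime $p$ (the bar denoting reduction modulo $p$). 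For $p\nmid m$ the ring $\Z[\zeta_m]$ is the ring of integers of $\Q(\zeta_m)$ and $p$ is unramified there, so the Dedekind--Kummer theorem gives that $\overline{\Phi_m}$ is squarefree and a product of $\phi(m)/d$ distinct monic irreducibles of common degree $d=\mathrm{ord}_m(p)$, the multiplicative order of $p$ modulo $m$. Thus $\overline{A_n^g}$ must carry a monic irreducible factor of degree exactly $\mathrm{ord}_m(p)$ over $\F_p$; the whole point is to rule this out by a well-chosen $p$.

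The technical core is a reduction of $\overline{A_n^g}$ to bounded complexity. Logarithmic differentiation of $\sum_n P_n^g(X)q^n=\exp\!\big(X\sum_n g(n)q^n/n\big)$ gives the recursion $n\,P_n^g(X)=X\sum_{k=1}^n g(k)P_{n-k}^g(X)$, equivalently $A_n^g(X)=X\sum_{k=1}^n g(k)\frac{(n-1)!}{(n-k)!}A_{n-k}^g(X)$. The integer $\frac{(n-1)!}{(n-k)!}=\prod_{i=1}^{k-1}(n-i)$ vanishes mod $p$ as soon as $\{n-1,\dots,n-k+1\}$ meets a multiple of $p$; substituting $n=ps+r$ with $0\le r<p$ and inducting first on $r$ and then on $s$, I expect to obtain the multiplicative congruence
\[
A_n^g(X)\ \equiv\ A_p^g(X)^{\lfloor n/p\rfloor}\,A_r^g(X)\pmod p .
\]
Separately, a Kummer-type divisibility argument on the partition expansion of $P_p^g$, together with Wilson's congruence $(p-1)!\equiv-1$, identifies $A_p^g(X)\equiv X^p-g(p)X=X\big(X^{p-1}-g(p)\big)\pmod p$. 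Since $X^{p-1}-c$ factors over $\F_p$ into irreducibles of common degree equal to the order of $c$ in $\F_p^\times$ (and splits completely when $c\in\{0,1\}$), it follows that \emph{for every $n$} every monic irreducible factor of $\overline{A_n^g}$ of degree $\ge2$ lies among the irreducible factors of $X^{p-1}-g(p)$ and of $\overline{A_2^g},\dots,\overline{A_{p-1}^g}$.

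It then remains to find, for each $m\ge3$, a prime $p$ making $\mathrm{ord}_m(p)$ incompatible with that short list. For odd $m$, $p=2$ works for free: by the above $\overline{A_n^g}$ is a product of the linear polynomials $X$ and $X-g(2)$ over $\F_2$, hence splits completely, while $\mathrm{ord}_m(2)\ge2$. For even $m$ with $3\nmid m$, $p=3$ works similarly: over $\F_3$ only the linears and the single quadratic $X^2-g(3)$ can occur, and $\Phi_m$ modulo $3$ cannot be assembled from these unless $m=4$. The remaining cases — $m=4$ and the multiples of $6$ — are exactly what hypotheses (1)--(3) are designed for: at $p=5,7,11$ they delete the degree-$2$, degree-$4$ and degree-$6$ irreducible factors (respectively) from $A_2^g,\dots,A_{p-1}^g$, so that by the previous paragraph the irreducible factors of $\overline{A_n^g}$ over $\F_p$ are confined to an explicit list determined only by the residues $g(2),\dots,g(p)\bmod p$; one then checks that for each such $m$ and each admissible residue tuple some $p\in\{5,7,11\}$ coprime to $m$ makes $\overline{\Phi_m}$ fail to divide any product drawn from that list. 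The hard part will be precisely this last step — verifying, over all residue tuples, that the three conditions leave no exceptional $m$ (the delicate small cases being $m=4,6,12,24$, where the factor $X^{p-1}-g(p)$ can coincide with part of $\overline{\Phi_m}$, forcing one to fall back on a different prime), after reducing it to a finite computation by noting that $\mathrm{ord}_m(p)$ outgrows every attainable factor degree for all but boundedly many $m$ — together with establishing the multiplicative congruence and the identification $A_p^g\equiv X^p-g(p)X\pmod p$ rigorously.
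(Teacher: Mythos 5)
Your overall strategy is the standard one, and its intermediate steps are exactly the machinery this paper records (without reproving the theorem, which is only cited from \.{Z}mija): the reduction $\Phi_m\mid A_n^g$ in $\Z[X]$, the congruence $A_{\ell p+r}^g\equiv A_r^g\,(A_p^g)^{\ell}\pmod p$, and $A_p^g\equiv X(X^{p-1}-g(p))\pmod p$ are Lemma~\ref{lem:SplittingOfA_n^gModP} and Proposition~\ref{prop:SplittingBehaviorOfA_p^g(X)ModP}. But as a proof the proposal has two genuine gaps. First, you never use the integrality hypothesis to pin down $g(p)\bmod p$: by the second half of Lemma~\ref{lem:SplittingOfA_n^gModP} (equivalently, combined with Proposition~\ref{prop:SplittingBehaviorOfA_p^g(X)ModP}), integer-valuedness forces $A_p^g\equiv X(X^{p-1}-1)\pmod p$, i.e.\ $g(p)\equiv 1\pmod p$ for \emph{every} prime $p$. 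This is not optional slack: if $g(5)$, $g(7)$, $g(11)$ are left free, the factor $X^{p-1}-g(p)$ puts non-linear irreducibles on your ``attainable'' list at $p=5,7,11$ (e.g.\ irreducible quadratics mod $5$ when $g(5)\equiv -1$) which hypotheses (1)--(3) do not exclude, so the degree bookkeeping you describe no longer closes and your deferred verification balloons into a factor-by-factor analysis over all residue tuples. With $g(p)\equiv 1\pmod p$ in hand, only $A_2^g,\dots,A_{p-1}^g$ can contribute non-linear factors, and the attainable non-linear degrees are $\{3\}$ mod $5$ by (1), $\{2,3,5\}$ mod $7$ by (2), and $\{2,\dots,9\}\setminus\{6\}$ mod $11$ by (3).

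Second, and mainly: the decisive step---showing that for \emph{every} remaining $m$ some prime yields an inertia degree outside the attainable list---is precisely what you defer, and your proposed reduction is unsound as framed. You work only with $p\nmid m$ and ask for ``some $p\in\{5,7,11\}$ coprime to $m$'', but for $m$ divisible by $2\cdot3\cdot5\cdot7\cdot11$ no prime on your list is coprime to $m$; likewise ``$\mathrm{ord}_m(p)$ outgrows every attainable degree for all but boundedly many $m$'' fails, since the problematic $m$ are exactly those divisible by many listed primes, not finitely many sporadic ones. You need the ramified case, $\Phi_m\equiv\Phi_{m_p}(X)^{\varphi(m)/\varphi(m_p)}\pmod p$ with $m_p$ the prime-to-$p$ part, so that the relevant degree is $\mathrm{ord}_{m_p}(p)$ (this is the inertia-degree statement in the paper's preliminaries). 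After that the finishing analysis must actually be carried out; it is short once $g(p)\equiv1\pmod p$ is known: for $6\mid m$ with $5\nmid m$ one has $\mathrm{ord}_m(5)\notin\{1,3\}$ (since $3\nmid 4$ and $3\nmid 5^3-1$); for $30\mid m$ with $7\nmid m$ one has $\mathrm{ord}_m(7)\notin\{1,2,3,5\}$ (since $5\nmid 7^d-1$ for those $d$); and for $210\mid m_{11}$ one has $6\mid\mathrm{ord}_{m_{11}}(11)$, hence a degree not in $\{2,\dots,9\}\setminus\{6\}$. (In fact, once the ramified reduction and $g(p)\equiv1\pmod p$ are available, the primes $2$ and $3$ alone already suffice---every $A_n^g$ splits into linear factors mod $2$ and mod $3$, while $\mathrm{ord}_{m_2}(2)\geq 2$ whenever $m$ has an odd prime divisor and $\mathrm{ord}_m(3)\geq 2$ for $m$ a power of two---which is Theorem~\ref{thm:NoRootsAtAlgebraicIntegersThatAreCongruentToPrimitiveRootsOfUnity} at $\alpha=0$ and gives the stated conclusion without the $5,7,11$ bookkeeping.) Without some such completed endgame, the theorem is not proved.
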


In a recent preprint, Neuhauser and the first author reduced the amount of assumptions to a simple condition on the function $g$ modulo $3$ and extended the range of non-roots of the D'Arcais polynomials. Surprisingly, their proof relies on invoking a well-known and fundamental theorem from algebraic number theory: the Dedekind--Kummer Theorem.

\begin{thm}[Heim--Neuhauser \cite{HeimNeu25}*{Theorem 4}]\label{thm:NoRootsAtAlgebraicIntegersThatAreCongruentToPrimitiveRootsOfUnity}
    Let $g: \N \longrightarrow \Z$ be an arithmetic function with $g(1) = 1$ and $m \geq 3$.
    \begin{enumerate}
        \item If there exists a prime $p \neq 2$ such that $p \mid m$, then
        \[P_n^g(\zeta_m + 2\alpha) \neq 0\]
        for all $\alpha \in \Z[\zeta_m]$.
        \item If there exists a prime $p > 3$ such that $p \mid m$ or $4 \mid m$ and if $g(3) \equiv 0,1 \mod 3$, then
        \[P_n^g(\zeta_m + 3\alpha) \neq 0\]
        for all $\alpha \in \Z[\zeta_m]$.
    \end{enumerate}
\end{thm}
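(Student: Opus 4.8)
The plan is to reduce both assertions to a statement about the reduction modulo a small prime of the normalised D'Arcais polynomial $A_n^g(X):=n!\,P_n^g(X)$. Since $g$ is integer-valued with $g(1)=1$, the logarithmic-derivative recurrence $n\,P_n^g(X)=X\sum_{k=1}^{n}g(k)\,P_{n-k}^g(X)$ rescales to $A_n^g(X)=X\sum_{k=1}^{n}g(k)\,\frac{(n-1)!}{(n-k)!}\,A_{n-k}^g(X)$, so $A_n^g\in\Z[X]$ is monic of degree $n$ and has the same complex roots as $P_n^g$. Suppose, for a contradiction, that $\beta:=\zeta_m+p\alpha$ is a root of $P_n^g$ for some $\alpha\in\Z[\zeta_m]$, where $p=2$ in part (1) and $p=3$ in part (2); then $A_n^g(\beta)=0$. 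Since $\Z[\zeta_m]=\Z[X]/(\Phi_m)$ is the full ring of integers of $\Q(\zeta_m)$, we have $\Z[\zeta_m]/p\Z[\zeta_m]\cong\F_p[X]/(\overline{\Phi_m})$, and $\beta\equiv\zeta_m\pmod{p\Z[\zeta_m]}$; hence the reduction $\overline{A_n^g}\in\F_p[X]$ vanishes at the image of $\zeta_m$, that is, $\overline{\Phi_m}$ divides $\overline{A_n^g}$ in $\F_p[X]$. By the Dedekind--Kummer theorem (equivalently, the classical description of the splitting of rational primes in cyclotomic fields), every irreducible factor of $\overline{\Phi_m}$ over $\F_p$ has the same degree $f:=\mathrm{ord}_{m'}(p)$, where $m'$ is the largest divisor of $m$ coprime to $p$. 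Under the divisibility hypotheses one has $f\geq2$: in part (1) an odd prime $p_0\mid m$ divides $m'$ and $\mathrm{ord}_{p_0}(2)\geq2$; in part (2), a prime $p_0>3$ dividing $m$ divides $m'$ and $\mathrm{ord}_{p_0}(3)\geq2$, while if $4\mid m$ then $4\mid m'$ and $f$ is a multiple of $\mathrm{ord}_4(3)=2$. So it suffices to prove that, under the respective hypothesis on $g$, the polynomial $\overline{A_n^g}$ is a product of linear factors over $\F_p$, since then $\overline{\Phi_m}\nmid\overline{A_n^g}$, a contradiction.

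For $p=2$ (part (1)) this is straightforward and needs no hypothesis on $g$. Reducing the recurrence modulo $2$: for $k\geq3$ the coefficient $\frac{(n-1)!}{(n-k)!}=(n-1)(n-2)\cdots(n-k+1)$ contains two consecutive integers and vanishes, so $\overline{A_n^g}\equiv X\bigl(\overline{A_{n-1}^g}+(n-1)\,\overline{g(2)}\;\overline{A_{n-2}^g}\bigr)\pmod2$. Induction on $n$ yields $\overline{A_n^g}=X^n$ if $g(2)$ is even and $\overline{A_n^g}=X^{\lceil n/2\rceil}(X+1)^{\lfloor n/2\rfloor}$ if $g(2)$ is odd, both of which split over $\F_2$.

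For $p=3$ (part (2)) the reasoning is similar but more delicate. Modulo $3$ only the terms $k\in\{1,2,3\}$ survive (three consecutive integers are divisible by $6$), giving $\overline{A_n^g}\equiv X\bigl(\overline{A_{n-1}^g}+(n-1)\,\overline{g(2)}\;\overline{A_{n-2}^g}+(n-1)(n-2)\,\overline{g(3)}\;\overline{A_{n-3}^g}\bigr)\pmod3$; the coefficients $(n-1)$ and $(n-1)(n-2)$ modulo $3$ depend only on $n\bmod3$, so the recurrence depends on $g$ only through $\bar g_2:=g(2)\bmod3\in\F_3$ and $\bar g_3:=g(3)\bmod3$, which by hypothesis lies in $\{0,1\}$. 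A strong induction on $n$ then gives the explicit factorisations: if $\bar g_3=0$, then $\overline{A_{3k}^g}=X^{3k}$, $\overline{A_{3k+1}^g}=X^{3k+1}$ and $\overline{A_{3k+2}^g}=X^{3k+1}(X+\bar g_2)$; if $\bar g_3=1$, then $\overline{A_{3k}^g}=X^{k}\bigl((X-1)(X+1)\bigr)^{k}$, $\overline{A_{3k+1}^g}=X^{k+1}\bigl((X-1)(X+1)\bigr)^{k}$ and $\overline{A_{3k+2}^g}=X^{k+1}\bigl((X-1)(X+1)\bigr)^{k}(X+\bar g_2)$. All of these split over $\F_3$, which finishes the proof. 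The hypothesis $g(3)\not\equiv2\pmod3$ is exactly what is required: already $\overline{A_3^g}=X\bigl(X^2-\bar g_3\bigr)$, and $X^2-\bar g_3$ is the irreducible quadratic $X^2+1$ --- the minimal polynomial over $\F_3$ of a primitive fourth root of unity --- precisely when $\bar g_3=2$.

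I expect the main obstacle to be the mod-$3$ induction of the last paragraph: unlike the mod-$2$ case, the reduced recurrence couples three consecutive terms with coefficients varying with $n\bmod3$, so a crude degree count is insufficient, and one must propagate the precise factorisations and verify the cancellations that keep each intermediate sum a product of linear polynomials --- for instance $X(X+\bar g_2)+2\bar g_2X+2\equiv X^2-1\pmod3$ in the step $n\equiv0\pmod3$. The other ingredients --- the recurrence for $A_n^g$, the isomorphism $\Z[\zeta_m]/p\cong\F_p[X]/(\overline{\Phi_m})$, the Dedekind--Kummer computation of the residue degree, and the reduction of a putative root modulo a prime above $p$ --- are standard.
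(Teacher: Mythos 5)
Your proof is correct, but it is worth noting that the present paper does not prove this statement at all: it is quoted from Heim--Neuhauser \cite{HeimNeu25}, whose argument (as described here) applies the Dedekind--Kummer Theorem to the element $\beta=\zeta_m+p\alpha$ itself and therefore needs the nontrivial index lemma \cite{HeimNeu25}*{Lemma 3} asserting $p\nmid\kappa_{\zeta_m+p\alpha}$, before comparing the non-linear irreducible factor of $m_\beta$ modulo $p$ with the linear splitting of $A_n^g$ modulo $p$. You reach the same local obstruction by a shortcut: since $\beta\equiv\zeta_m\pmod{p\Z[\zeta_m]}$ and $A_n^g\in\Z[X]$, a root $\beta$ forces $\overline{\Phi_m}\mid\overline{A_n^g}$ in $\F_p[X]$ via $\Z[\zeta_m]/p\cong\F_p[X]/(\overline{\Phi_m})$, so you never need the minimal polynomial of $\beta$ nor any control of $\kappa_\beta$; Dedekind--Kummer (or the classical cyclotomic factorization) is only invoked for the standard fact that all irreducible factors of $\overline{\Phi_m}$ have degree $\mathrm{ord}_{m'}(p)\geq 2$ under the stated divisibility hypotheses, which you verify correctly. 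For the splitting of $A_n^g$ modulo $2$ and $3$ you also proceed differently from the machinery used in this paper, namely Lemma \ref{lem:SplittingOfA_n^gModP} (the periodicity $A_{\ell p+r}^g\equiv A_r^g\,(A_p^g)^\ell\bmod p$) combined with Proposition \ref{prop:SplittingBehaviorOfA_p^g(X)ModP} ($A_p^g\equiv X(X^{p-1}-g(p))\bmod p$): instead you run a direct strong induction on the truncated recurrence modulo $p$, and your explicit factorizations ($X^n$ or $X^{\lceil n/2\rceil}(X+1)^{\lfloor n/2\rfloor}$ mod $2$; $X^{3k+\epsilon}$-type and $X^{k+\epsilon}\bigl((X-1)(X+1)\bigr)^k$-type products mod $3$, with the cancellation $X(X+\bar g_2)+2\bar g_2X+2\equiv X^2-1$) agree with what those two results give, so the induction is sound. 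The trade-off: your route is more elementary and self-contained (no index computation, no separate periodicity lemma), while the paper's lemmas isolate the two ingredients in a form that transfers immediately to other primes $p$ and to the non-cyclotomic elements treated in Theorems \ref{thm:NonRootsTranslatingWithRationalIntegers} and \ref{thm:NotRamifiedThm}, where the reduction-mod-$p\Z[\zeta_m]$ shortcut is not available and the index $\kappa_\alpha$ genuinely matters.
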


\subsection{Results} Our main results partially tackle the challenges posed by Neuhauser and the first author \cite{HeimNeu25} as to widen the range of known non-roots of the D'Arcais polynomials. The way the Dedekind--Kummer Theorem is applied in the proofs of Theorem \ref{thm:NoRootsAtAlgebraicIntegersThatAreCongruentToPrimitiveRootsOfUnity} heavily relies on control over the ring $\Z[\alpha]$ where $\alpha$ is an algebraic integer. We extend the scope of how to work around the obstructions that depend on such an $\alpha$ and provide some more non-roots of the D'Arcais polynomials. As our first result, we improve a result by Neuhauser and the first author regarding the Gaussian integers $\Z[i]$ and $P_n^\sigma(X)$.

\begin{thm}\label{thm:NonRootsOnImaginaryAxisWithSigma}
    Let $a, b \in \Z$ and $n \in \N$.
    \begin{enumerate}
        \item If $n \not \equiv 5 \mod 7$ and $21 \nmid a$, then $P_n^\sigma(ai + b) \neq 0$.
        \item If $n \equiv 5 \mod 7$ and either
        \begin{equation*}
            \textit{(i)} \; 3 \nmid a \qquad \textup{or} \qquad
            \textit{(ii)} \; a \not \equiv 0, \pm 1 \mod 7 \qquad \textup{or} \qquad
            \textit{(iii)} \; 7 \nmid a \textup{ and } 7 \nmid b,
        \end{equation*}
        then $P_n^\sigma(ai + b) \neq 0$.
    \end{enumerate}
\end{thm}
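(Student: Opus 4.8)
The plan is to argue by contradiction, extracting local obstructions at the primes $3$ and $7$ much as in the proof of Theorem~\ref{thm:NoRootsAtAlgebraicIntegersThatAreCongruentToPrimitiveRootsOfUnity}. First note that we may assume $a\neq 0$: if $a=0$ then each of the conditions $21\nmid a$, $3\nmid a$, $a\not\equiv 0,\pm 1\bmod 7$ and $7\nmid a$ is violated, so no hypothesis of the theorem holds. Suppose then, for contradiction, that $P_n^\sigma(ai+b)=0$, and set $A_n^\sigma(X):=n!\,P_n^\sigma(X)$, which lies in $\Z[X]$ and is monic of degree $n$. Then $ai+b$ is an algebraic integer, a root of $A_n^\sigma$, with minimal polynomial $f(X)=X^2-2bX+(a^2+b^2)$ over $\Q$; since $f$ is monic and divides $A_n^\sigma$ in $\Q[X]$, polynomial division shows $f\mid A_n^\sigma$ already in $\Z[X]$, hence $\bar f\mid\bar A_n^\sigma$ in $\F_p[X]$ for every prime $p$. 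For $p\in\{3,7\}$ with $p\nmid a$, the discriminant $-4a^2=-(2a)^2$ of $f$ is $(-1)$ times a nonzero square, which is a quadratic non-residue modulo $p$ because $p\equiv 3\bmod 4$; hence $\bar f$ is irreducible over $\F_p$ (equivalently, $p$ is inert in $\Z[i]$, the Dedekind--Kummer input valid here since $p\nmid[\Z[i]:\Z[ai+b]]=|a|$). So it suffices to control the monic irreducible quadratic factors of $A_n^\sigma$ modulo $3$ and modulo $7$.

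The structural heart of the argument is the congruence
\[
A_{n+p}^\sigma(X)\equiv (X^p-X)\,A_n^\sigma(X)\pmod p\qquad\text{for every prime }p\text{ and all }n\geq 0 .
\]
To prove it, multiply the standard recursion $n\,P_n^\sigma(X)=X\sum_{m=1}^{n}\sigma(m)P_{n-m}^\sigma(X)$ by $(n-1)!$ to obtain
\[
A_n^\sigma(X)=X\sum_{m=1}^{n}\sigma(m)\Bigl(\textstyle\prod_{j=1}^{m-1}(n-j)\Bigr)A_{n-m}^\sigma(X).
\]
Modulo $p$ the coefficient $\prod_{j=1}^{m-1}(n-j)$, being a product of $m-1$ consecutive integers, vanishes once $m\geq p+1$, so for $n\geq p$ this is a recursion in $A_{n-1}^\sigma,\dots,A_{n-p}^\sigma$ alone, with coefficients depending only on $n\bmod p$. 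A routine induction, whose base cases $A_p^\sigma,\dots,A_{2p-1}^\sigma$ are a finite computation, then gives the congruence; iterating it yields $A_n^\sigma(X)\equiv (X^p-X)^{\lfloor n/p\rfloor}\,A_r^\sigma(X)\pmod p$ with $r=n\bmod p$. Because $X^p-X=\prod_{a\in\F_p}(X-a)$ splits completely over $\F_p$, the irreducible factors of $\bar A_n^\sigma$ of degree $\geq 2$ are precisely those of $\bar A_r^\sigma$.

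It now remains to factor $A_r^\sigma\bmod p$ for the finitely many residues $r$. Modulo $3$ one has $A_0^\sigma\equiv 1$, $A_1^\sigma\equiv X$, $A_2^\sigma\equiv X^2$, all products of linear factors, so $\bar A_n^\sigma\bmod 3$ never carries an irreducible quadratic factor; with the irreducibility of $\bar f$ this rules out $ai+b$ as a root whenever $3\nmid a$, which settles part of (1) and all of (2)(i). Modulo $7$ one computes
\[
A_0^\sigma\equiv 1,\quad A_1^\sigma\equiv X,\quad A_2^\sigma\equiv X(X+3),\quad A_3^\sigma\equiv X(X+1)^2,\quad A_4^\sigma\equiv X^2(X+1)(X+3),
\]
\[
A_5^\sigma\equiv X(X+3)(X+6)(X^2+1),\qquad A_6^\sigma\equiv X(X+1)(X+3)(X^3+6X^2+6X+4),
\]
the cubic being irreducible over $\F_7$. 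Hence $\bar A_n^\sigma\bmod 7$ has no irreducible quadratic factor when $n\not\equiv 5\bmod 7$, and when $n\equiv 5\bmod 7$ its only irreducible quadratic factor is $X^2+1$. If $7\nmid a$ and $n\not\equiv 5\bmod 7$, this contradicts $\bar f\mid\bar A_n^\sigma$, which (together with the already handled case $3\nmid a$) finishes (1). If $7\nmid a$ and $n\equiv 5\bmod 7$, then $\bar f\mid\bar A_n^\sigma$ forces $\bar f=X^2+1$, i.e.\ $7\mid b$ and $a\equiv\pm 1\bmod 7$; this is excluded by (2)(ii) (where $a\not\equiv\pm 1\bmod 7$) and by (2)(iii) (where $7\nmid b$), settling (2).

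The main obstacle is the shift congruence modulo $7$ and the attendant factorizations: one has to verify the base cases $A_7^\sigma,\dots,A_{13}^\sigma$, carry out the bookkeeping for the recursions modulo $7$ (which involve up to six earlier terms), and determine the factorizations of $A_0^\sigma,\dots,A_6^\sigma$ over $\F_7$ exactly --- especially the irreducibility of the cubic factor of $A_6^\sigma$ and the precise quadratic $X^2+1$ dividing $A_5^\sigma$ --- since the whole case distinction depends on which residues $r$ contribute a quadratic factor and on which quadratic it is.
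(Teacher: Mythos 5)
Your argument is correct and follows essentially the same route as the paper: reduce the (monic, hence $\Z[X]$-) divisibility $f\mid A_n^\sigma$ modulo $3$ and $7$, observe that $f(X)=(X-b)^2+a^2$ stays irreducible mod $p\in\{3,7\}$ when $p\nmid a$, use the shift congruence $A_{\ell p+r}^\sigma\equiv A_r^\sigma\cdot(X^p-X)^\ell\pmod p$ to reduce to the residues $r\le 6$, and read off that mod $7$ the only irreducible quadratic factor is $X^2+1$ (for $r=5$) and the only other non-linear factor is an irreducible cubic (for $r=6$); your factorizations of $A_0^\sigma,\dots,A_6^\sigma$ over $\F_7$ agree with the paper's integral factorizations reduced mod $7$, and your endgame case analysis ($7\mid b$, $a\equiv\pm1\bmod 7$) is exactly the paper's. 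Two points of difference: (a) you get irreducibility of $\bar f$ from the discriminant $-4a^2$ being a non-residue for $p\equiv 3\bmod 4$, which is a slightly more elementary substitute for the paper's combination of Lemma~\ref{lem:IndexOfaZeta_mAndaOmega_D} with the Dedekind--Kummer Theorem (and for the $3\nmid a$ case the paper simply cites Theorem~\ref{thm:NonRootsTranslatingWithRationalIntegers}(2) rather than redoing the mod-$3$ factorization); (b) you re-derive the shift congruence from the recursion, and as written this leaves the base cases $A_7^\sigma,\dots,A_{13}^\sigma\bmod 7$ unverified --- the one real loose end in your write-up. That congruence is precisely Lemma~\ref{lem:SplittingOfA_n^gModP} (\.{Z}mija, Heim--Neuhauser), already available in the paper with $A_p^g\equiv X(X^{p-1}-1)\bmod p$ since $\sigma(p)\equiv 1\bmod p$ (cf.\ Proposition~\ref{prop:SplittingBehaviorOfA_p^g(X)ModP}); citing it removes the need for any extra computation and closes the gap, so the remaining obstacle you flag is not actually an obstacle.
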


\begin{rmk}
    In the figure below, we display known non-roots of the $P_n^\sigma(X)$ in $\Q(i)$ near the origin. Note that $P_n^\sigma(3k \pm 1) \neq 0$ for $k \in \Z$ as well as $P_n^\sigma(\pm 3i) \neq 0$ was already proven by Neuhauser and the first author. As of now, it appears that each improvement of the modulus $m$ is tied to explicitly evaluating $A_n^\sigma(X)$ for all $n \leq m$.
\end{rmk}

\begin{figure}[H]
    \centering
    \begin{tikzpicture}
    \node[below right=2pt of {(0,0)},fill=white]{$0$};
    \node[below right=2pt of {(0,4)}]{$i\mathbb{Z}$};
    \node[below left=2pt of {(6,0)}]{$\mathbb{Z}$};
    
    \draw[gray, ultra thin] (-6, 0) -- (0, 0);
    \draw[gray, ultra thin] (0, -4) -- (0, 4);
    
    \draw[gray, ultra thin] (-6, 1) -- (6, 1);
    \draw[gray, ultra thin] (-6, 2) -- (6, 2);
    \draw[gray, ultra thin] (-6, 3) -- (6, 3);
    \draw[gray, ultra thin] (-6, -1) -- (6, -1);
    \draw[gray, ultra thin] (-6, -2) -- (6, -2);
    \draw[gray, ultra thin] (-6, -3) -- (6, -3);
    
    \draw[gray, ultra thin] (5, 4) -- (5, -4);
    \draw[gray, ultra thin] (4, 4) -- (4, -4);
    \draw[gray, ultra thin] (3, 4) -- (3, -4);
    \draw[gray, ultra thin] (2, 4) -- (2, -4);
    \draw[gray, ultra thin] (1, 4) -- (1, -4);
    \draw[gray, ultra thin] (-1, 4) -- (-1, -4);
    \draw[gray, ultra thin] (-2, 4) -- (-2, -4);
    \draw[gray, ultra thin] (-3, 4) -- (-3, -4);
    \draw[gray, ultra thin] (-4, 4) -- (-4, -4);
    \draw[gray, ultra thin] (-5, 4) -- (-5, -4);

        \draw[black, ultra thick] (0, 0) -- (6,0);

        \fill[black] (0,1) circle [radius=3pt];
        \fill[black] (0,2) circle [radius=3pt];
        \fill[black] (0,3) circle [radius=3pt];
        \fill[black] (0,-1) circle [radius=3pt];
        \fill[black] (0,-2) circle [radius=3pt];
        \fill[black] (0,-3) circle [radius=3pt];
        \filldraw[black,fill=white] (1,1) circle [radius=3pt];
        \filldraw[black,fill=white] (1,2) circle [radius=3pt];
        \filldraw[black,fill=white] (1,3) circle [radius=3pt];
        \filldraw[black,fill=white] (1,-1) circle [radius=3pt];
        \filldraw[black,fill=white] (1,-2) circle [radius=3pt];
        \filldraw[black,fill=white] (1,-3) circle [radius=3pt];
        \filldraw[black,fill=white] (2,1) circle [radius=3pt];
        \filldraw[black,fill=white] (2,2) circle [radius=3pt];
        \filldraw[black,fill=white] (2,3) circle [radius=3pt];
        \filldraw[black,fill=white] (2,-1) circle [radius=3pt];
        \filldraw[black,fill=white] (2,-2) circle [radius=3pt];
        \filldraw[black,fill=white] (2,-3) circle [radius=3pt];
        \filldraw[black,fill=white] (3,1) circle [radius=3pt];
        \filldraw[black,fill=white] (3,2) circle [radius=3pt];
        \filldraw[black,fill=white] (3,3) circle [radius=3pt];
        \filldraw[black,fill=white] (3,-1) circle [radius=3pt];
        \filldraw[black,fill=white] (3,-2) circle [radius=3pt];
        \filldraw[black,fill=white] (3,-3) circle [radius=3pt];
        \filldraw[black,fill=white] (4,1) circle [radius=3pt];
        \filldraw[black,fill=white] (4,2) circle [radius=3pt];
        \filldraw[black,fill=white] (4,3) circle [radius=3pt];
        \filldraw[black,fill=white] (4,-1) circle [radius=3pt];
        \filldraw[black,fill=white] (4,-2) circle [radius=3pt];
        \filldraw[black,fill=white] (4,-3) circle [radius=3pt];
        \filldraw[black,fill=white] (5,1) circle [radius=3pt];
        \filldraw[black,fill=white] (5,2) circle [radius=3pt];
        \filldraw[black,fill=white] (5,3) circle [radius=3pt];
        \filldraw[black,fill=white] (5,-1) circle [radius=3pt];
        \filldraw[black,fill=white] (5,-2) circle [radius=3pt];
        \filldraw[black,fill=white] (5,-3) circle [radius=3pt];
        \filldraw[black,fill=white] (-1,1) circle [radius=3pt];
        \filldraw[black,fill=white] (-1,2) circle [radius=3pt];
        \filldraw[black,fill=white] (-1,3) circle [radius=3pt];
        \filldraw[black,fill=white] (-1,-1) circle [radius=3pt];
        \filldraw[black,fill=white] (-1,-2) circle [radius=3pt];
        \filldraw[black,fill=white] (-1,-3) circle [radius=3pt];
        \filldraw[black,fill=white] (-2,1) circle [radius=3pt];
        \filldraw[black,fill=white] (-2,2) circle [radius=3pt];
        \filldraw[black,fill=white] (-2,3) circle [radius=3pt];
        \filldraw[black,fill=white] (-2,-1) circle [radius=3pt];
        \filldraw[black,fill=white] (-2,-2) circle [radius=3pt];
        \filldraw[black,fill=white] (-2,-3) circle [radius=3pt];
        \filldraw[black,fill=white] (-3,1) circle [radius=3pt];
        \filldraw[black,fill=white] (-3,2) circle [radius=3pt];
        \filldraw[black,fill=white] (-3,3) circle [radius=3pt];
        \filldraw[black,fill=white] (-3,-1) circle [radius=3pt];
        \filldraw[black,fill=white] (-3,-2) circle [radius=3pt];
        \filldraw[black,fill=white] (-3,-3) circle [radius=3pt];
        \filldraw[black,fill=white] (-4,1) circle [radius=3pt];
        \filldraw[black,fill=white] (-4,2) circle [radius=3pt];
        \filldraw[black,fill=white] (-4,3) circle [radius=3pt];
        \filldraw[black,fill=white] (-4,-1) circle [radius=3pt];
        \filldraw[black,fill=white] (-4,-2) circle [radius=3pt];
        \filldraw[black,fill=white] (-4,-3) circle [radius=3pt];
        \filldraw[black,fill=white] (-5,1) circle [radius=3pt];
        \filldraw[black,fill=white] (-5,2) circle [radius=3pt];
        \filldraw[black,fill=white] (-5,3) circle [radius=3pt];
        \filldraw[black,fill=white] (-5,-1) circle [radius=3pt];
        \filldraw[black,fill=white] (-5,-2) circle [radius=3pt];
        \filldraw[black,fill=white] (-5,-3) circle [radius=3pt];
    \end{tikzpicture}
    \label{Gausspic}
    \caption{The Gaussian integers near the origin, each represented by an intersection of two lines. Previously known non-roots of each $P_n^\sigma(X)$ in black, newly found marked with empty points.}
\end{figure}

Additionally, it is not only possible to close some gaps for the standard case $\sigma$ but also for a more general $g$ in two of the most standard cases of number fields: cyclotomic or quadratic fields. For the latter case, set
\[\omega_D := \begin{cases}
    \sqrt{D}, &D \not \equiv 1 \mod 4,\\
    \frac{1 + \sqrt{D}}{2}, &D \equiv 1 \mod 4,
\end{cases}\]
for square-free $D \in \Z \setminus \{0,1\}$. 

\begin{thm}\label{thm:NonRootsTranslatingWithRationalIntegers}
    Let $a, b \in \Z$, $n \in \N$ and $g: \N \longrightarrow \Z$ be an arithmetic function such that $g(1) = 1$ and $m \geq 3$.
        \begin{enumerate}
            \item If $m$ is divisible by an odd rational prime and $2 \nmid a$, then
            $P_n^g(a\zeta_m + b) \neq 0.$
            \item If $m$ is divisble by a rational prime $p > 3$ or by $4$ and $3 \nmid a$ as well as $g(3) \equiv 0,1 \mod 3$, then
            $P_n^g(a\zeta_m + b) \neq 0.$
            \item Let $D \equiv 5 \mod 8$ be square-free. If $2 \nmid a$, then $P_n^g(a\omega_D + b) \neq 0.$
            \item Let $D \equiv 2 \mod 3$ be square-free. If $3 \nmid a$ and $g(3) \equiv 0, 1 \mod 3$, then
            $P_n^g(a\omega_D + b) \neq 0.$
        \end{enumerate}
\end{thm}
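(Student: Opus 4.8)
The plan is to treat all four parts uniformly and by contradiction, feeding a congruence for the polynomials $A_n^g(X):=n!\,P_n^g(X)$ into the Dedekind--Kummer Theorem in the same way as in the proof of Theorem~\ref{thm:NoRootsAtAlgebraicIntegersThatAreCongruentToPrimitiveRootsOfUnity}. Multiplying the recursion $nP_n^g=X\sum_{k=1}^{n}g(k)P_{n-k}^g$ by $(n-1)!$ gives
\[
A_n^g=X\sum_{k=1}^{n}g(k)\,\frac{(n-1)!}{(n-k)!}\,A_{n-k}^g ,
\]
and since $A_0^g=1$, $g(1)=1$ (so the $k=1$ term is $XA_{n-1}^g$), and every coefficient $\frac{(n-1)!}{(n-k)!}=(n-1)(n-2)\cdots(n-k+1)$ is an integer, induction shows $A_n^g\in\Z[X]$ is monic of degree $n$. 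In parts (1),(2) set $\gamma:=\zeta_m$ and in parts (3),(4) set $\gamma:=\omega_D$; in each case $K:=\Q(\gamma)$ is a number field with $\mathcal O_K=\Z[\gamma]$, and the element to be shown not to be a root of $P_n^g$ is $\beta:=a\gamma+b\in\mathcal O_K$. In each part the hypothesis forces $\ell\nmid a$ for the relevant prime $\ell$ ($\ell=2$ in (1),(3) and $\ell=3$ in (2),(4)); in particular $a\neq 0$, so $\beta$ generates $K$. Assume for contradiction that $P_n^g(\beta)=0$, equivalently $A_n^g(\beta)=0$.

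The engine is the assertion that \emph{$\bar A_n^g$ is a product of linear polynomials over $\F_\ell$ for every $n$} -- unconditionally for $\ell=2$, and for $\ell=3$ provided $g(3)\equiv 0,1\pmod 3$. Indeed, $\frac{(n-1)!}{(n-k)!}$ is a product of $k-1$ consecutive integers, hence $\equiv 0\pmod\ell$ once $k-1\ge\ell$, so the sum above truncates mod $2$ to $k\le 2$ and mod $3$ to $k\le 3$:
\[
A_n^g\equiv XA_{n-1}^g+(n-1)g(2)\,XA_{n-2}^g \pmod 2,
\]
\[
A_n^g\equiv XA_{n-1}^g+(n-1)g(2)\,XA_{n-2}^g+(n-1)(n-2)g(3)\,XA_{n-3}^g \pmod 3 .
\]
Starting from $A_0^g=1$, $A_1^g=X$, a straightforward induction distinguishing the parity of $n$ gives, over $\F_2$, $\bar A_{2m}^g\equiv\bigl(X(X+g(2))\bigr)^m$ and $\bar A_{2m+1}^g\equiv X^{m+1}(X+g(2))^m$, both products of linear factors. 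Modulo $3$ one computes $\bar A_3^g\equiv X(X^2-g(3))$, which splits into linear factors over $\F_3$ exactly when $g(3)$ is a square mod $3$, i.e.\ $g(3)\equiv 0,1\pmod 3$; in that case, if $g(3)\equiv 0$ the mod-$3$ recursion has only two terms and the mod-$2$ argument applies verbatim, while if $g(3)\equiv 1$ the analogous, somewhat longer, induction -- the one carried out in \cite{HeimNeu25} behind Theorem~\ref{thm:NoRootsAtAlgebraicIntegersThatAreCongruentToPrimitiveRootsOfUnity}(2) -- again shows $\bar A_n^g$ is a product of linear factors over $\F_3$. I expect this mod-$3$ induction to be the only genuinely delicate point: one must pin down the exact factorisation type of $\bar A_n^g$ over $\F_3$ (it is of the form $X^a(X-1)^b(X+1)^c$), because the weaker statement ``every root lies in $\F_3$'' does not on its own propagate through the three-term recursion.

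Granting the engine, conclude as follows. Since $\Z[\beta]=\Z[a\gamma]$ and $\ell\nmid a$, the integer $a$ is a unit in $\Z_{(\ell)}$, so $\Z_{(\ell)}[\beta]=\Z_{(\ell)}[\gamma]=\mathcal O_K\otimes\Z_{(\ell)}$; hence $\ell\nmid[\mathcal O_K:\Z[\beta]]$ and the Dedekind--Kummer Theorem applies to the minimal polynomial $\mu_\beta$ at $\ell$. Because $A_n^g$ is monic over $\Z$ with $A_n^g(\beta)=0$, Gauss's Lemma gives $\mu_\beta\mid A_n^g$ in $\Z[X]$, so $\bar\mu_\beta\mid\bar A_n^g$ in $\F_\ell[X]$; by the engine $\bar A_n^g$ is a product of linear polynomials, hence so is $\bar\mu_\beta$, and therefore, by Dedekind--Kummer, every prime of $\mathcal O_K$ lying over $\ell$ has residue degree $1$. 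This is impossible in every case. In (1) the residue degree of $2$ in $\Q(\zeta_m)$ equals the multiplicative order of $2$ modulo the odd part $m'$ of $m$, which is $\ge 2$ since an odd prime divides $m$, whence $m'\ge 3$. In (2) the residue degree of $3$ in $\Q(\zeta_m)$ equals the order of $3$ modulo the prime-to-$3$ part $m''$ of $m$, which is $\ge 2$ since a prime $>3$ or $4$ divides $m$, whence $m''\ge 4$. In (3), $D\equiv 5\pmod 8$ makes $2$ inert in $\Q(\sqrt D)$ (the minimal polynomial $X^2-X+\tfrac{1-D}{4}$ of $\omega_D$ reduces to the irreducible $X^2+X+1$ over $\F_2$), of residue degree $2$. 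In (4), $D\equiv 2\pmod 3$ is a non-residue mod $3$ with $3\nmid D$, so $3$ is inert in $\Q(\sqrt D)$, of residue degree $2$. Each of these contradicts the preceding sentence, so $P_n^g(\beta)\neq 0$, and parts (1)--(4) follow.
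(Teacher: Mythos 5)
Your argument has the same skeleton as the paper's: ensure the relevant prime $\ell\in\{2,3\}$ does not divide $[\mathcal{O}_K:\Z[a\gamma+b]]$ so that Dedekind--Kummer applies to $\beta=a\gamma+b$, note that $\ell$ has residue degree at least $2$ in $K$ (order of $2$, resp.\ $3$, modulo the prime-to-$\ell$ part of $m$ in the cyclotomic cases; inertness when $D\equiv 5 \bmod 8$, resp.\ $D\equiv 2\bmod 3$), and contradict this with the claim that $A_n^g$ splits into linear factors modulo $\ell$. Two local differences from the paper: you obtain $\ell\nmid[\mathcal{O}_K:\Z[\beta]]$ by localizing at $\ell$, which is a clean substitute for Lemma~\ref{lem:IndexOfaZeta_mAndaOmega_D} (where the index is computed explicitly as $|a|^{\varphi(m)(\varphi(m)-1)/2}$, resp.\ $|a|$), and you build the splitting ``engine'' by induction on recursions truncated modulo $2$ and $3$ rather than from the paper's two ready-made ingredients.

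The only soft spot is the step you yourself flag. Modulo $3$ you verify $A_3^g\equiv X(X^2-g(3))$, assert that the $g(3)\equiv 0$ case is ``verbatim'' the mod-$2$ induction (it is not literally so: the coefficients $(n-1)$ and $(n-1)(n-2)$ now range over three residues, so the closed forms change), and outsource the $g(3)\equiv 1$ case to \cite{HeimNeu25}. The paper closes this in one line: by Lemma~\ref{lem:SplittingOfA_n^gModP}, $A_{3k+r}^g\equiv A_r^g\,(A_3^g)^{k}\pmod 3$ with $r\in\{0,1,2\}$, where $A_0^g=1$, $A_1^g=X$, $A_2^g=X(X+g(2))$ already split over $\Z$, and Proposition~\ref{prop:SplittingBehaviorOfA_p^g(X)ModP} (equivalently Corollary~\ref{cor:SplittingIfg(p)IsNice}) shows $A_3^g$ splits modulo $3$ exactly when $g(3)\equiv 0,1\pmod 3$; the same congruence with $p=2$ replaces your mod-$2$ induction entirely. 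So your conclusion and overall route are correct, but to be self-contained you should either invoke Lemma~\ref{lem:SplittingOfA_n^gModP} or actually carry out the three-term mod-$3$ induction you only sketch.
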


The local behavior of $g$ appears to be far more influential than one might guess at first glance. We employ one of our previous intermediate results to derive this last theorem.

\begin{thm}\label{thm:NotRamifiedThm}
    Let $a, b \in \Z$ and $g: \N \longrightarrow \Z$ be an arithmetic function such that $g(1) = 1$. Assume further that $D \in \Z \setminus \{0,1\}$ is square-free. If there exists a rational prime $p$ such that either
    \begin{enumerate}
        \item $g(p) \equiv 0 \mod p$ and $p \nmid 2aD$ or
        \item $g(p) \equiv 1 \mod p$ and $\left(\frac{D}{p}\right) = -1,$
    \end{enumerate} then we have $P_n^g(a\omega_D + b) \neq 0$ for all $n \equiv 0, 1 \mod p$.
\end{thm}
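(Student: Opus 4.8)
The plan is to push the Dedekind--Kummer mechanism behind Theorems \ref{thm:NoRootsAtAlgebraicIntegersThatAreCongruentToPrimitiveRootsOfUnity} and \ref{thm:NonRootsTranslatingWithRationalIntegers} through, but fed with the behaviour of $A_n^g$ modulo the prime $p$ furnished by the hypothesis. We may assume $a\neq 0$, and in case (2) also $p\nmid a$ (in case (1) this is contained in $p\nmid 2aD$); then $\theta:=a\omega_D+b$ is a quadratic algebraic integer whose monic minimal polynomial $q(X)=X^2+uX+v\in\Z[X]$ has $\operatorname{disc}(q)$ equal to $a^2D$ up to a perfect square (namely $4a^2D$ when $D\not\equiv 1\bmod 4$ and $a^2D$ when $D\equiv 1\bmod 4$). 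Suppose for contradiction that $P_n^g(\theta)=0$ for some $n\geq 1$ with $n\equiv 0,1\bmod p$. Since $A_n^g=n!\,P_n^g$ is monic with rational integer coefficients and vanishes at $\theta$, division by the monic $q$ yields $q\mid A_n^g$ in $\Z[X]$, hence $\bar q\mid\bar A_n^g$ in $\F_p[X]$, where a bar denotes reduction modulo $p$. As $p\nmid aD$ we have $p\nmid\operatorname{disc}(q)$, so $\bar q$ is a separable quadratic; applying the Dedekind--Kummer Theorem to $\Z[\theta]\subseteq\mathcal O_{\Q(\sqrt D)}$, whose index $|a|$ is prime to $p$, the factorisation of $\bar q$ over $\F_p$ mirrors the splitting of $p$ in $\Q(\sqrt D)$, so in case (2) $\bar q$ is \emph{irreducible} over $\F_p$ because $\left(\tfrac{D}{p}\right)=-1$.

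It remains to describe $\bar A_n^g$ for $n\equiv 0,1\bmod p$, and here I would reprove (or quote, this being the intermediate result alluded to in the statement) the following mod-$p$ structure. Logarithmic differentiation of the defining product gives $n\,P_n^g(X)=X\sum_{k=1}^n g(k)\,P_{n-k}^g(X)$, which after multiplication by $(n-1)!$ becomes the integral recursion
\[A_n^g(X)=X\sum_{k=1}^n g(k)\,\frac{(n-1)!}{(n-k)!}\,A_{n-k}^g(X).\]
Modulo $p$ the coefficient $\frac{(n-1)!}{(n-k)!}=\prod_{j=1}^{k-1}(n-j)$ picks up a factor $\equiv 0$ — hence vanishes — as soon as $k$ exceeds $r:=n\bmod p$ (or exceeds $p$, when $r=0$); the surviving coefficients are then $\frac{(r-1)!}{(r-k)!}$, resp.\ $(-1)^{k-1}(k-1)!$ by Wilson's theorem, which are precisely those of the recursion for $A_r^g$, resp.\ for $A_p^g$. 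An induction on $n$ then yields the block identity
\[\bar A_n^g(X)\;\equiv\;\bar A_p^g(X)^{\lfloor n/p\rfloor}\,\bar A_{n\bmod p}^g(X)\pmod{p},\]
where moreover $A_p^g(X)\equiv X^p-g(p)X\pmod{p}$, the latter because in the partition expansion of $p!\,P_p^g$ only the cycle types $(1^p)$ and $(p)$ survive modulo $p$. I expect this induction — verifying that the truncated mod-$p$ recursion for $A_n^g$ collapses onto the one for $A_{n\bmod p}^g$, uniformly in the block index $\lfloor n/p\rfloor$ — to be the main technical point.

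The conclusion is then immediate. For $n\equiv 0,1\bmod p$ one has $\bar A_{n\bmod p}^g\in\{1,X\}$, so $\bar A_n^g$ equals $\bar A_p^g(X)^{\lfloor n/p\rfloor}$ or $X\,\bar A_p^g(X)^{\lfloor n/p\rfloor}$ in $\F_p[X]$. In case (1), $g(p)\equiv 0\bmod p$ forces $\bar A_p^g(X)=X^p$, hence $\bar A_n^g\equiv X^{n}\pmod p$; but then $\bar q\mid X^{n}$ would force $\bar q=X^2$, contradicting the separability of $\bar q$. In case (2), $g(p)\equiv 1\bmod p$ forces $\bar A_p^g(X)=X^p-X=\prod_{c\in\F_p}(X-c)$, so $\bar A_n^g$ is a product of linear factors over $\F_p$; but then $\bar q\mid\bar A_n^g$ would make $\bar q$ split over $\F_p$, contradicting its irreducibility. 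Either way $P_n^g(\theta)\neq 0$. Finally, the restriction $n\equiv 0,1\bmod p$ is exactly what prevents the local factor $\bar A_{n\bmod p}^g$ from carrying an irreducible quadratic; for $p\in\{2,3\}$ every residue $r<p$ already has $\bar A_r^g$ split over $\F_p$, which is why Theorem \ref{thm:NonRootsTranslatingWithRationalIntegers} can dispense with the congruence condition on $n$.
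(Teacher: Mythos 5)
Your argument is correct and follows essentially the same route as the paper: from $P_n^g(a\omega_D+b)=0$ you pass to $m_{a\omega_D+b}(X)\mid A_n^g(X)$ over $\F_p$, use $\kappa_{a\omega_D+b}=|a|$ together with the Dedekind--Kummer Theorem to control the minimal polynomial modulo $p$ (separable in case (1), an irreducible quadratic in case (2)), and contrast this with $A_n^g\equiv (A_p^g)^{\lfloor n/p\rfloor}A_{n\bmod p}^g$ and $A_p^g\equiv X\left(X^{p-1}-g(p)\right)\bmod p$, i.e.\ exactly Lemma~\ref{lem:SplittingOfA_n^gModP} and Proposition~\ref{prop:SplittingBehaviorOfA_p^g(X)ModP}, which you re-derive (correctly, via the recursion and the partition expansion) instead of quoting. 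The one caveat---your ``we may assume $a\neq 0$ and, in case (2), $p\nmid a$'' is really an additional hypothesis rather than a harmless reduction---is equally present, implicitly, in the paper's own proof, so it does not constitute a deviation from the paper's argument.
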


\section{Preliminaries}

We collect some basic ingredients that are essential for proving our results. For more details we refer to Neukirch \cite{Neukirch} for the first subsection and to \.{Z}mija \cite{Zmija} and Heim--Neuhauser \cite{HeimNeu25} for the second one.

\subsection{Algebraic number theory and the Dedekind--Kummer Theorem}

As usual, if $K$ is a number field, we denote its ring of integers by $\mathcal{O}_K$. The ring $\mathcal{O}_K$ is a Dedekind domain and as such every non-zero ideal $\mathfrak{a} \subset \mathcal{O}_K$ admits a decomposition
\begin{equation*}
    \mathfrak{a} = \mathfrak{p}_1^{e_1}\ldots \mathfrak{p}_g^{e_g}
\end{equation*}
where $\mathfrak{p}_1, \ldots, \mathfrak{p}_g \subset \mathcal{O}_K$ are prime ideals and $g, e_1, \ldots, e_g \in \N$. Additionally, for each prime ideal $\mathfrak{p}_j$, the quantity $f_j := \left|\nicefrac{\mathcal{O}_K}{\mathfrak{p}_j}\right|$ is finite. The $e_j$s are called ramification indices whereas the $f_j$s are called inertia indices.

If $\alpha$ is an algebraic integer, the index of $\alpha$
\[\kappa_\alpha := \left[\mathcal{O}_{\Q(\alpha)} : \Z[\alpha]\right]\]
is finite.

\begin{thm}[Dedekind--Kummer]\label{thm:DedekindKummer}
    Let $\alpha$ be an algebraic integer and $m_\alpha(X)$ its minimal polynomial over $\Q$. For any prime $p$ not dividing $\kappa_\alpha$, let
    \[m_\alpha(X) \equiv m_{\alpha,1}(X)^{e_1} \ldots m_{\alpha,g}(X)^{e_g} \mod p\]
    be a decomposition into pairwise distinct polynomials $m_{\alpha,1}(X), \ldots, m_{\alpha,g}(X)$ that are irreducible modulo $p$. Then we have
    \[p\mathcal{O}_{\Q(\alpha)} = \mathfrak{p}_1^{e_1} \ldots \mathfrak{p}_g^{e_g}\]
    as a decomposition into prime ideals in $\mathcal{O}_{\Q(\alpha)}$ where
    \[\mathfrak{p}_j = p\mathcal{O}_{\Q(\alpha)} + m_{\alpha,j}(\alpha)\mathcal{O}_{\Q(\alpha)}.\]
    In particular, $f_j = \deg m_{\alpha,j}(X)$.
\end{thm}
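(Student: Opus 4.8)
The plan is to work in the ring of integers $\mathcal{O}_K$ of $K := \Q(\omega_D) = \Q(\sqrt{D})$ and to detect non-vanishing through a single well-chosen prime $\mathfrak{p}$ lying above $p$. Write $\alpha := a\omega_D + b \in \mathcal{O}_K$ and recall that $A_n^g(X) := n!\,P_n^g(X)$ has rational integer coefficients (clear denominators in the recurrence $n P_n^g = X\sum_{m=1}^n g(m) P_{n-m}^g$), so that $A_n^g(\alpha) \in \mathcal{O}_K$ and $P_n^g(\alpha) = 0$ if and only if $A_n^g(\alpha) = 0$. Hence it suffices to produce a prime ideal $\mathfrak{p} \mid p$ with $A_n^g(\alpha) \not\equiv 0 \pmod{\mathfrak{p}}$. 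Note one cannot reduce $A_n^g$ naively: since $p \mid n!$, we have $A_n^g(\alpha) \equiv 0 \pmod{p}$ for all $n \ge p$, so the sharper polynomial congruence below is essential.

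The engine is the intermediate result obtained en route to Theorem \ref{thm:NonRootsTranslatingWithRationalIntegers}: reducing the recurrence modulo $p$ (the coefficients $\tfrac{(n-1)!}{(n-m)!}$ vanish mod $p$ outside a single residue block) together with Wilson's theorem for the block-leader yields, in $\F_p[X]$,
\[ A_{pk+j}^g(X) \equiv A_j^g(X)\,\bigl(X^p - g(p)X\bigr)^{k} \pmod{p}, \qquad 0 \le j \le p-1, \]
where $A_p^g(X) \equiv X^p - g(p)X$. For $n \equiv 0,1 \pmod p$ we only need $j = 0$ (with $A_0^g = 1$) and $j = 1$ (with $A_1^g = X$). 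Writing $\bar\alpha$ for the image of $\alpha$ in the residue field $\F := \mathcal{O}_K/\mathfrak{p}$, evaluation and reduction modulo $\mathfrak{p}$ give
\[ A_{pk}^g(\alpha) \equiv \bigl(\bar{\alpha}^{\,p} - \overline{g(p)}\,\bar{\alpha}\bigr)^{k}, \qquad A_{pk+1}^g(\alpha) \equiv \bar{\alpha}\,\bigl(\bar{\alpha}^{\,p} - \overline{g(p)}\,\bar{\alpha}\bigr)^{k} \pmod{\mathfrak{p}}. \]
Since $\F$ is a field, both are non-zero precisely when $\bar{\alpha} \neq 0$ and $\bar{\alpha}^{\,p-1} \neq \overline{g(p)}$; thus the theorem reduces to exhibiting a prime $\mathfrak{p} \mid p$ at which $\bar\alpha$ avoids the roots of $X^p - g(p)X$.

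It remains to verify this criterion in the two cases, reading off the splitting of $p$ in $\Q(\sqrt D)$ from the Dedekind--Kummer Theorem applied to $\omega_D$ (whose index in $\mathcal{O}_K$ is $1$). In case (1), $\overline{g(p)} = 0$, so the criterion collapses to $\bar\alpha \neq 0$, i.e.\ $\alpha \notin \mathfrak{p}$. As $p \nmid 2D$, $p$ is unramified, and as $p \nmid a$ we have $\alpha \notin p\mathcal{O}_K$; if $p$ is inert then $\bar\alpha = \bar a \bar\omega_D + \bar b \neq 0$ since $\bar\omega_D \notin \F_p$, while if $p = \mathfrak{p}_1\mathfrak{p}_2$ splits then $\alpha \notin \mathfrak{p}_1 \cap \mathfrak{p}_2 = p\mathcal{O}_K$ lets us pick the factor not containing $\alpha$. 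In case (2), $\overline{g(p)} = 1$, and $\bar\alpha^{\,p-1} = 1$ exactly when $\bar\alpha \in \F_p^\times$, so the criterion becomes $\bar\alpha \notin \F_p$; here $\left(\tfrac{D}{p}\right) = -1$ forces $p$ inert, so $\mathfrak{p} = (p)$, $\F = \F_{p^2}$ and $\bar\omega_D \notin \F_p$, whence $\bar\alpha = \bar a\bar\omega_D + \bar b \notin \F_p$ as soon as $\bar a \neq 0$, i.e.\ $p \nmid a$.

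The difficulties here are conceptual rather than computational. First, one must use the mod-$p$ factorization rather than a direct reduction, the point being that the block structure concentrates all the content into the single block-leader $X^p - g(p)X$. Second, one must align the splitting type of $p$ with the hypotheses: case (2) genuinely needs $\bar\alpha \notin \F_p$, which is exactly why inertness (from $\left(\tfrac{D}{p}\right) = -1$, giving $\F = \F_{p^2}$) together with $p \nmid a$ is the operative requirement, whereas in case (1) the weaker demand $\bar\alpha \neq 0$ is what permits the relaxed hypothesis $p \nmid 2aD$ and the freedom to select $\mathfrak{p}$ in the split case. I expect the alignment of cases with the residue data, and in particular tracking the role of $p \nmid a$, to be the main subtlety.
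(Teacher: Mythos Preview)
Your proposal does not address the stated theorem. The statement given is the Dedekind--Kummer Theorem, a classical result describing the factorization of $p\mathcal{O}_{\Q(\alpha)}$ via the factorization of $m_\alpha(X)$ modulo $p$; the paper does not prove it but quotes it as a preliminary with a reference to Neukirch. Nothing in your write-up establishes that the ideals $\mathfrak{p}_j = p\mathcal{O}_{\Q(\alpha)} + m_{\alpha,j}(\alpha)\mathcal{O}_{\Q(\alpha)}$ are prime, that they are distinct, or that their product with the asserted exponents equals $p\mathcal{O}_{\Q(\alpha)}$.

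What you have actually written is a proof of Theorem~\ref{thm:NotRamifiedThm}. For that result your approach is essentially the paper's: reduce $A_n^g(X)$ modulo $p$ via the block congruence $A_{pk+j}^g(X) \equiv A_j^g(X)\bigl(X^p - g(p)X\bigr)^k$ (i.e.\ Lemma~\ref{lem:SplittingOfA_n^gModP} together with Proposition~\ref{prop:SplittingBehaviorOfA_p^g(X)ModP}), use $A_0^g = 1$ and $A_1^g = X$ for the residues $j \in \{0,1\}$, and then check that the image $\bar\alpha$ of $\alpha = a\omega_D + b$ in the residue field avoids the roots of $X^p - g(p)X$. The paper phrases the last step as the non-divisibility $m_{a\omega_D+b}(X) \nmid A_n^g(X)$ modulo $p$, reading off the shape of $m_{a\omega_D+b}$ mod $p$ from Dedekind--Kummer, whereas you evaluate directly in $\mathcal{O}_K/\mathfrak{p}$; these are two formulations of the same computation. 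One small point: in case~(2) you invoke $p \nmid a$, which is not listed among the hypotheses (only $\bigl(\tfrac{D}{p}\bigr) = -1$ is assumed); the paper's proof also tacitly uses this at the outset when it appeals to Lemma~\ref{lem:IndexOfaZeta_mAndaOmega_D}, so this is not a discrepancy you have introduced.
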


The Dedekind--Kummer Theorem provides the base for an exact procedure to determine all the prime ideals of an algebraic number field due to the Primitive Element Theorem.

A prime $p$ in a number field $K$ is said to ramify in $L \supset K$ if in its decomposition into prime ideals of $\mathcal{O}_L$, there exists an index $j$ such that $e_j > 1$. It is known that $p$ only ramifies if and only if it divides the discriminant of the extension $L/K$. In the special case of $\Q(\alpha)$ being Galois over $\Q$, the indices are equal in the sense that $e = e_1 = \ldots = e_g$ and $f = f_1 = \ldots = f_g$.

In the case of cyclotomic fields, the inertia index of a rational prime $p$ may be easily computed. Let $m$ be a natural number and $\zeta_m$ an $m$-th primitive root of unity. Then $\mathcal{O}_{\Q(\zeta_m)} = \Z[\zeta_m]$ and, since cyclotomic fields are Galois extensions over the rationals, we have that the unique inertia index of a prime $p$ in $\Q(\zeta_m)$ is the least natural number $f$ such that $p^f \equiv 1 \mod m_p$ where $m = p^{v_p(m)}m_p$ with $p \nmid m_p$. Furthermore, $p$ ramifies if and only if $p \mid m$.

\subsection{The index of a suborder} 

A critical aspect of the Dedekind--Kummer Theorem is that the prime $p$ we use as a ``lense'' is valid as long as it does not divide the index $\kappa_\alpha$. Usually, the power basis generated by an algebraic integer $\alpha$ only satisfies the requirement of being a $\Q$-basis of $\Q(\alpha)$ but is not an integral basis for $\mathcal{O}_{\Q(\alpha)}$. In other words, we often have $\kappa_\alpha > 1$. A prominent example was presented by Dedekind who investigated $\Q(\theta)$ where $\theta$ is a root of the polynomial $g(X) = X^3 - X^2 - 2X - 8$. Given an integral basis of a subring $\mathcal{O}$ of $\mathcal{O}_K$ for some number field $K$, the following well-known lemma equips us with a handy method to compute the index $[\mathcal{O}_K : \mathcal{O}]$.

\begin{lemma}[\cite{SteTall}*{Theorem 1.17}] \label{lem:IndexIsDetOfMatrix}
    Let $A$ be a free $\Z$-module of rank $k \in \N$ and $B \subset A$ a submodule of rank $k$ as well. Let their $\Z$-bases be given by $\{a_1, \ldots, a_k\}$ and $\{b_1, \ldots, b_k\}$, respectively. Then there exists a matrix $M \in \Z^{k \times k}$ such that
    \[\begin{pmatrix}
        b_1\\
        \vdots\\
        b_k
    \end{pmatrix} = M \begin{pmatrix}
        a_1\\
        \vdots\\
        a_k
    \end{pmatrix}\]
    and it holds
    \[|\det(M)| = [A : B].\]
\end{lemma}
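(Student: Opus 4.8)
The plan is to prove the two assertions separately: first the existence of the matrix $M$, then the determinant formula via the Smith normal form. For existence, since $\{a_1,\dots,a_k\}$ is a $\Z$-basis of $A$ and each $b_j$ lies in $B\subseteq A$, I can write $b_j=\sum_{i=1}^k M_{ji}a_i$ uniquely with $M_{ji}\in\Z$; collecting these coefficients into $M=(M_{ji})\in\Z^{k\times k}$ yields the asserted matrix identity $\mathbf b=M\mathbf a$, where $\mathbf a=(a_1,\dots,a_k)^{\mathsf T}$ and $\mathbf b=(b_1,\dots,b_k)^{\mathsf T}$. Because $\{b_1,\dots,b_k\}$ is a $\Z$-basis of the rank-$k$ module $B$, it is linearly independent over $\Z$, hence over $\Q$; as its coordinate matrix with respect to the $\Q$-basis $\{a_i\}$ of $A\otimes_\Z\Q$ is exactly $M$, this forces $\det M\neq 0$. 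Equivalently $B$ has full rank in $A$, so $A/B$ is finite and $[A:B]$ is a well-defined positive integer.

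For the determinant formula I would invoke the Smith normal form over the principal ideal domain $\Z$: there exist unimodular matrices $U,V\in\mathrm{GL}_k(\Z)$, that is with $\det U,\det V\in\{\pm1\}$, such that $UMV=D$, where $D=\operatorname{diag}(d_1,\dots,d_k)$ with positive integers $d_i$. The key point is that passing from $M$ to $D$ is simultaneously a change of both bases. I set $\mathbf a':=V^{-1}\mathbf a$ and $\mathbf b':=U\mathbf b$. Since $U$ and $V^{-1}$ lie in $\mathrm{GL}_k(\Z)$, both they and their inverses have integer entries, so the entries of $\mathbf a'$ form a new $\Z$-basis of $A$ and those of $\mathbf b'$ a new $\Z$-basis of $B$. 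Then
\[
\mathbf b'=U\mathbf b=UM\mathbf a=DV^{-1}\mathbf a=D\mathbf a',
\]
that is, $b_i'=d_i a_i'$ for each $i$.

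With respect to these adapted bases the inclusion $B\subseteq A$ is diagonal: $A=\bigoplus_{i=1}^k\Z a_i'$ while $B=\bigoplus_{i=1}^k\Z\, d_i a_i'$, whence
\[
A/B\;\cong\;\bigoplus_{i=1}^k \Z/d_i\Z
\qquad\text{and}\qquad
[A:B]=\prod_{i=1}^k d_i .
\]
On the other hand $M=U^{-1}DV^{-1}$ gives $|\det M|=|\det U^{-1}|\,|\det D|\,|\det V^{-1}|=\prod_{i=1}^k d_i$, and comparing the two expressions yields $|\det M|=[A:B]$, as claimed.

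The only substantial ingredient is the existence of the Smith normal form, which I would either cite or reduce to the standard algorithm of integer row and column operations, each operation being multiplication by an elementary unimodular matrix and hence a legitimate change of $\Z$-basis. The step that must not be glossed over is \emph{why} $\mathbf a'$ and $\mathbf b'$ are genuine $\Z$-bases rather than merely $\Q$-bases: this is precisely where invertibility \emph{over $\Z$} is used, so that the old basis elements are recovered as integer combinations of the new ones. Finiteness of $A/B$, guaranteed by $\det M\neq 0$ from the first step, ensures that $[A:B]$ is a true positive integer equal to $|\det D|=\prod_i d_i$; were $B$ of smaller rank the quotient would be infinite and both sides would degenerate.
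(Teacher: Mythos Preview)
Your proof is correct and follows the standard Smith normal form argument. Note, however, that the paper does not actually supply a proof of this lemma: it is quoted verbatim from Stewart--Tall \cite{SteTall}*{Theorem 1.17} and used as a black box, so there is no ``paper's own proof'' to compare against. Your argument is essentially the textbook one and would serve perfectly well as the missing proof.
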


\subsection{The splitting of $A_n^g(X)$ modulo a prime $p$}

In the first subsection of this section, the dependency between the splitting of $m_\alpha(X)$ modulo $p$ and the ramification and inertia indices of $p$ in $\Q(\alpha)$ was established. The equivalence
\[P_n^g(\alpha) = 0 \qquad \Longleftrightarrow \qquad m_\alpha(X) \mid P_n^g(X)\footnote{This division property is to be understood over $\Q[X]$.}\]
motivates the investigation of $A_n^g(X) = n!P_n^g(X)$ modulo $p$ since the $P_n^g(X)$ are---as opposed to the $A_n^g(X)$---not necessarily polynomials in $\Z[X]$.

A crucial step is the following lemma.

\begin{lemma}[\.{Z}mija \cite{Zmija}*{Lemma 5}, Heim--Neuhauser \cite{HeimNeu25}*{Lemma 1}]\label{lem:SplittingOfA_n^gModP}
    Let $g$ be a $\Z$-valued arithmetic function such that $g(1) = 1$ and $p$ a prime number. Then
    \[A_{\ell p + r}^g(X) \equiv A_r^g(X) \cdot A_p^g(X)^\ell \mod p\]
    for all $\ell, r \in \N$.

    If $P_p^g(X)$ takes rational integer values at rational integer arguments, we have
    \[A_p^g(X) \equiv X \cdot \left(X^{p-1} - 1\right) \mod p.\]
\end{lemma}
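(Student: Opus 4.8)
The plan is to move from the polynomials $P_n^g(X)$, which are not $p$-integral, to the integral polynomials $A_n^g(X)=n!\,P_n^g(X)$ by giving them a symmetric-group interpretation, and then to read the congruence off a conjugation action of $\Z/p$. First I would record the combinatorial shape of $A_N^g$. Expanding $\exp\big(X\sum_{k\ge 1} g(k)q^k/k\big)$ by the exponential formula, the coefficient of $q^N$ is $P_N^g(X)=\sum_{\lambda\vdash N}\prod_k \frac{1}{m_k!}\,(Xg(k)/k)^{m_k}$, where $m_k$ is the multiplicity of $k$ in $\lambda$; multiplying by $N!$ turns $\frac{N!}{\prod_k k^{m_k}m_k!}$ into the number of permutations of cycle type $\lambda$. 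Hence
\[
A_N^g(X)=\sum_{\pi\in S_N} w(\pi),\qquad w(\pi):=\prod_{\text{cycles }C\text{ of }\pi} X\,g(|C|).
\]
In particular $A_N^g(X)\in\Z[X]$, and $w(\pi)$ depends only on the cycle type of $\pi$, so it is a class function, invariant under conjugation.

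Next I would prove the one-step identity $A_{n+p}^g\equiv A_n^g A_p^g \mod p$ and then iterate. Fix $n\ge 0$, write $N=n+p$, split $\{1,\dots,N\}$ into $\{1,\dots,n\}$ and a $p$-element block $B$, and let $c\in S_N$ be a $p$-cycle on $B$ fixing the rest. The group $\langle c\rangle\cong\Z/p$ acts on $S_N$ by conjugation; since $w$ is a class function this preserves $w$, and every orbit has size $1$ or $p$. Summing $A_N^g=\sum_\pi w(\pi)$ orbit by orbit, the orbits of size $p$ contribute multiples of $p$, so
\[
A_{n+p}^g(X)\equiv\sum_{\pi\in C(c)} w(\pi)\mod p,
\]
where $C(c)$ is the centralizer of $c$. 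As $c$ has cycle type $(p,1^n)$ one has $C(c)=S_n\times\langle c\rangle$ with the factors acting on disjoint supports, and since $w$ is multiplicative over disjoint supports the sum factors:
\[
A_{n+p}^g(X)\equiv A_n^g(X)\cdot\Big(\sum_{i=0}^{p-1} w(c^i|_B)\Big)\mod p.
\]
Because $p$ is prime, $c^0|_B$ is the identity (contributing $X^p$, using $g(1)=1$) while each $c^i|_B$ with $1\le i\le p-1$ is a $p$-cycle (contributing $Xg(p)$), so the bracket equals $X^p+(p-1)Xg(p)\equiv X^p-g(p)X \mod p$. Taking $n=0$, where $A_0^g=1$, identifies this bracket with $A_p^g(X)\bmod p$; hence $A_{n+p}^g\equiv A_n^g A_p^g \mod p$, and an induction on $\ell$ gives $A_{\ell p+r}^g\equiv A_r^g (A_p^g)^\ell \mod p$.

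For the second assertion, the case $n=0$ above already yields $A_p^g(X)\equiv X^p-g(p)X \mod p$. If $P_p^g$ is integer-valued on $\Z$, then $A_p^g(x)=p!\,P_p^g(x)\equiv 0 \mod p$ for every $x\in\Z$; reducing and using Fermat's little theorem ($x^p\equiv x$) forces $(1-g(p))x\equiv 0\mod p$ for all $x$, whence $g(p)\equiv 1\mod p$ and $A_p^g(X)\equiv X^p-X=X(X^{p-1}-1)\mod p$. The hard part is the very first move: because the generating series of the $P_n^g$ carries denominators, one cannot reduce it modulo $p$ directly, and the entire argument rests on passing to the integral permutation model for $A_n^g$ and extracting the congruence from the $\Z/p$-orbit decomposition. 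The technical care concentrates in the centralizer computation $C(c)=S_n\times\langle c\rangle$ and the multiplicativity of $w$, which together make the sum over fixed permutations factor as $A_n^g\cdot A_p^g$.
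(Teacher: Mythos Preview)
Your proof is correct and takes a genuinely different route from the one the paper alludes to. The paper does not reprove this lemma but points to the recursion
\[A_n^g(X) = X \sum_{k = 1}^n \frac{(n-1)!}{(n-k)!}\,g(k)\,A_{n-k}^g(X)\]
as the key ingredient in the cited sources; the argument there proceeds by reducing this recursion modulo $p$ and inducting. You instead realize $A_N^g$ as the sum over $S_N$ of the cycle weight $w$ and extract the congruence from the orbit decomposition under a $\Z/p$-conjugation action, with the factorization $A_{n+p}^g\equiv A_n^g\cdot A_p^g$ emerging from the product structure $C(c)=S_n\times\langle c\rangle$ of the centralizer. A pleasant bonus of your method is that it yields $A_p^g(X)\equiv X^p-g(p)X\pmod p$ for free (the case $n=0$), which the paper establishes separately as Proposition~\ref{prop:SplittingBehaviorOfA_p^g(X)ModP} via the explicit partition formula for the coefficients $a_p^g(s)$. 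The recursion-based proof is more elementary and avoids group theory; your argument is more conceptual and explains structurally why the congruence is multiplicative.
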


The main argument in the proof to the above lemma is the recursion
\begin{equation*}\label{eq:Ang(S)Recursion}
    A_n^g(X) = X \sum_{k = 1}^n \frac{(n-1)!}{(n-k)!}g(k)A_{n-k}^g(X)
\end{equation*}
for any $n \geq 1$.

\section{Proofs}

In this section, we present the proofs of our results.

\subsection{The form of $A_p^g(X)$ modulo $p$}

In Theorem \ref{thm:Zmija}, \.{Z}mija already emphasized the utility of knowledge regarding the degrees of the irreducible factors of $A_n^g(X)$. An especially nice case occurs if $A_n^g(X)$ is a product of linear factors. In the global setting, i.e. in $\Z[X]$, this happens if and only if the roots of $A_n^g(X)$ are rational. However, if this is not the case, we might achieve this locally modulo a prime number $p$. As an intermediate step, we transform the approach towards the splitting behavior of the D'Arcais polynomials modulo a prime by focusing on the function $g$ instead of $A_n^g(X)$.

\begin{prop}\label{prop:SplittingBehaviorOfA_p^g(X)ModP}
    Let $g$ be a $\Z$-valued arithmetic function such that $g(1) = 1$. Then we have
    \begin{equation*}\label{eq:SplittingBehaviorOfA_p^g(X)ModP}
        A_p^g(X) \equiv X \cdot \left(X^{p-1} - g(p)\right) \mod p
    \end{equation*}
    for any rational prime $p$.
\end{prop}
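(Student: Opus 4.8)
The plan is to work directly from the recursion
\[
A_n^g(X) = X \sum_{k=1}^{n} \frac{(n-1)!}{(n-k)!}\, g(k)\, A_{n-k}^g(X),
\]
specialized to $n = p$, and reduce everything modulo $p$. First I would observe that $A_0^g(X) = 1$, so the $k = p$ term contributes exactly $X \cdot \frac{(p-1)!}{0!}\, g(p)\, A_0^g(X) = (p-1)!\, g(p)\, X$, which by Wilson's theorem is $\equiv -g(p)\, X \pmod p$. So the task is to show that the remaining sum $X \sum_{k=1}^{p-1} \frac{(p-1)!}{(p-k)!}\, g(k)\, A_{p-k}^g(X)$ reduces to $X^p \pmod p$, independently of the values $g(2), \dots, g(p-1)$.

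The key arithmetic input is the behavior of the coefficients $\frac{(p-1)!}{(p-k)!} = (p-1)(p-2)\cdots(p-k+1)$ modulo $p$: for $1 \le k \le p-1$ this product has $k-1$ factors, each congruent to $-1, -2, \dots, -(k-1)$ modulo $p$, hence $\frac{(p-1)!}{(p-k)!} \equiv (-1)^{k-1}(k-1)! \pmod p$. I expect the cleanest route is then an induction on $r$ (for $0 \le r \le p-1$) proving that $A_r^g(X) \equiv X^r \pmod p$ whenever $r < p$ — wait, that cannot be literally right since $A_r^g$ has degree $r$ but need not be $X^r$; rather the correct statement to aim for is weaker and is only needed at $r = p$. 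So instead I would set up a generating-function / formal identity: modulo $p$, the coefficients $\frac{(p-1)!}{(p-k)!} \equiv (-1)^{k-1}(k-1)!$ are exactly what appears when one expands $p \cdot \binom{1/\,?}{}$-type expressions, or more transparently, I would use the defining exponential
\[
\sum_{n\ge 0} P_n^g(X)\, q^n = \exp\!\Big(X \sum_{n\ge 1} g(n)\, \tfrac{q^n}{n}\Big)
\]
together with $A_n^g = n!\, P_n^g$, extract the coefficient of $q^p$, and reduce modulo $p$. In $\F_p[[q]]$ only the terms where the total "weight" is divisible by $p$ in a way that survives the factorials will contribute: the $\frac{1}{n}$ denominators are units for $1 \le n \le p-1$, and $n!\,$ kills most cross terms, leaving the contribution of $X^p$ (from $p$ copies of the $n=1$ term, giving $\frac{X^p}{p!} \cdot p! = X^p$, but one must be careful that $p!/ (1!)^p \cdot \frac{1}{p}$-bookkeeping is done over $\Z$ and only then reduced) plus the single $n = p$ term giving $X g(p)/p \cdot p! \equiv -g(p) X$.

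The main obstacle — and the step I would be most careful about — is the $\frac{1}{p}$ appearing in the $n = p$ summand of $X \sum g(n) q^n/n$: one cannot reduce $\frac{g(p)}{p}$ modulo $p$ naively. The correct handling is to keep the identity over $\Q$ (or $\Z_{(p)}$) at the level of $A_p^g = p!\, P_p^g$, where the $p$ in the denominator is cancelled by a factor of $p$ from $p!$, so that $A_p^g \in \Z[X]$ with the $q^p$-coefficient of $p! \exp(\cdots)$ being a genuine integer polynomial; only after this cancellation does one reduce mod $p$. Equivalently, and perhaps more robustly, I would simply run the induction on the recursion as above, proving by downward bookkeeping that every term $X \cdot \frac{(p-1)!}{(p-k)!} g(k) A_{p-k}^g(X)$ with $2 \le k \le p-1$ contributes, after summation, precisely $X^p - g(p) X - (-g(p)X)$ worth beyond the two explicit terms — i.e. that the multinomial coefficients $\frac{p!}{1^{a_1} a_1!\, 2^{a_2} a_2! \cdots}$ attached to each monomial $\prod g(j)^{a_j}$ with $\sum j a_j = p$ and $(a_j) \ne (p,0,\dots)$, $(0,\dots,0,1)$ are all divisible by $p$. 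This last divisibility is the genuine content: it follows because $p \mid p!$ while the denominator $\prod j^{a_j} a_j!$ is coprime to $p$ (each $j \le p-1$ and each $a_j \le p-1$ in those cases, since $\sum j a_j = p$ forces every $a_j < p$ except the excluded extreme). Once that divisibility is established, the two surviving monomials are $X^p$ (with coefficient $p!/(1^p \cdot p!) = 1$) and $g(p) X$ (with coefficient $p!/(p \cdot 1!) = (p-1)! \equiv -1$), giving $A_p^g(X) \equiv X^p - g(p) X = X(X^{p-1} - g(p)) \pmod p$, as claimed.
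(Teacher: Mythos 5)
Your final argument is correct and is essentially the paper's proof: both expand $A_p^g(X)$ via the partition (multinomial) expansion of $\exp\bigl(X\sum_{n\geq 1} g(n)q^n/n\bigr)$, observe that every non-extreme partition of $p$ contributes a coefficient $p!/\prod_j j^{m_j}m_j!$ divisible by $p$ because the denominator is coprime to $p$, and evaluate the two surviving terms as $X^p$ and $(p-1)!\,g(p)X \equiv -g(p)X$ via Wilson's theorem. The initial detour through the recursion is unnecessary but harmless, since you correctly abandon it for the generating-function computation.
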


\begin{proof}
    Define $a_n^g(s)$ via
    \[A_n^g(X) := \sum_{s = 0}^n a_n^g(s)X^{s}\]
    with the convention that $a_n^g(s) = 0$ if $s < 0$ or $s > n$. Following \cite{AGOS}*{Lemma 2.1} and \cite{Stanley}*{Example 5.2.10}, we have
    \begin{align*}
        \sum_{n = 0}^\infty \frac{q^n}{n!} \sum_{s = 0}^n a_n^g(s) X^s &= \exp \left(X \sum_{n  = 1}^\infty g(n) \frac{q^n}{n}\right)\\
        &= \sum_{n = 0}^\infty q^n \sum_{\substack{\lambda \in P(n) \\ \lambda = (1^{m_1}, \ldots, n^{m_n})}} \prod_{j = 1}^n \frac{1}{m_j!} \left(\frac{g(j)X}{j}\right)^{m_j},
    \end{align*}
    yielding
    \begin{equation}\label{eq:CoefficientFormula}
        a_n^g(s) = n!\sum_{\substack{\lambda \in P(n) \\ \lambda = (1^{m_1}, \ldots, n^{m_n})\\
        |\lambda| = s}} \prod_{j = 1}^n \frac{1}{m_j!} \left(\frac{g(j)}{j}\right)^{m_j}.
    \end{equation}
    
    Suppose now that $n = p$ is prime. If $s = 1$, then $\lambda = (1^0, \ldots, (p-1)^0, p^1)$ and hence
    \[a_p^g(1) = p! \frac{g(p)}{p} = (p-1)!g(p).\]
    By Wilson's Theorem, we get $(p-1)! \equiv -1 \mod p$.

    If $s = p$, then $\lambda = (1^p, 2^0, \ldots, n^0)$ and hence
    \[a_p^g(p) = p! \frac{1}{p!} = 1.\]

    If $2 \leq s \leq p - 1$, then $m_j < p$ and $j < p$ for all $\lambda$ in the sum in \eqref{eq:CoefficientFormula}. Hence, the $p$-factor of the factorial in front survives and we get $p \mid a_p^g(s)$.
\end{proof}

By Fermat's Little Theorem, we harvest this low-hanging fruit which is an improvement of \cite{Zmija}*{Proposition 12} and similar previous results.

\begin{cor}\label{cor:SplittingIfg(p)IsNice}
    For any prime number $p$, the polynomial $A_p^g(X)$ modulo $p$ splits into linear factors if and only if $g(p) \equiv 0,1 \mod p$.
\end{cor}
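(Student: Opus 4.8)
The plan is to read off the factorization of $A_p^g(X)$ over $\F_p$ directly from Proposition \ref{prop:SplittingBehaviorOfA_p^g(X)ModP}, which already reduces everything to understanding the binomial $X^{p-1} - g(p)$ modulo $p$. The only external input needed is Fermat's Little Theorem in its two standard incarnations: $a^{p-1} = 1$ for every $a \in \F_p^\times$, and the equivalent polynomial identities $X^{p-1} - 1 = \prod_{a \in \F_p^\times}(X-a)$ and $X^p - X = \prod_{a \in \F_p}(X-a)$ in $\F_p[X]$.

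For the implication ``$g(p) \equiv 0,1 \Longrightarrow$ splitting'', I would treat the two residues separately. If $g(p) \equiv 0 \pmod p$, then $A_p^g(X) \equiv X \cdot (X^{p-1} - 0) = X^p \pmod p$, visibly a product of linear factors. If $g(p) \equiv 1 \pmod p$, then $A_p^g(X) \equiv X \cdot (X^{p-1} - 1) = X^p - X = \prod_{a \in \F_p}(X-a) \pmod p$, again a product of linear factors.

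For the converse I would argue contrapositively: assuming $g(p) \not\equiv 0,1 \pmod p$, I claim $A_p^g(X)$ has no root in $\F_p$ other than $0$, and in particular cannot split into linear factors. Indeed, if $\alpha \in \F_p$ were a root of $X^{p-1} - g(p)$, then $\alpha^{p-1} = g(p) \neq 0$ in $\F_p$, whence $\alpha \neq 0$ and so $\alpha \in \F_p^\times$; Fermat's Little Theorem then gives $g(p) = \alpha^{p-1} = 1$, contradicting $g(p) \not\equiv 1 \pmod p$. Since $X^{p-1} - g(p)$ has degree $p - 1 \geq 1$ and admits no linear factor over $\F_p$, the polynomial $A_p^g(X) \equiv X \cdot (X^{p-1} - g(p))$ is not a product of linear factors.

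I do not expect any genuine obstacle here; as the surrounding text indicates, this is low-hanging fruit once Proposition \ref{prop:SplittingBehaviorOfA_p^g(X)ModP} is available. The single point deserving a moment's care is the degenerate case $p = 2$: there $p - 1 = 1$, every integer satisfies $g(2) \equiv 0$ or $1 \pmod 2$, and $A_2^g(X) \equiv X(X - g(2)) \pmod 2$ always splits, so the stated equivalence holds vacuously on the ``only if'' side and is consistent on the ``if'' side.
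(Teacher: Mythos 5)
Your proof is correct and is exactly the paper's intended argument: the paper gives no written proof beyond citing Fermat's Little Theorem together with Proposition \ref{prop:SplittingBehaviorOfA_p^g(X)ModP}, and your case analysis (including the observation that $X^{p-1}-g(p)$ has no root in $\F_p$ when $g(p)\not\equiv 0,1 \pmod p$, plus the vacuous $p=2$ case) is precisely the routine verification being left to the reader.
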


\begin{rmk}
    \.{Z}mija \cite{Zmija}*{Lemma 5} showed that, if $P_p^g(X)$ takes rational integer values at rational integer arguments, then
    \[A_p^g(X) \equiv X \cdot (X - 1) \dots (X - p + 1) \mod p\]
    which corresponds to our case $g(p) \equiv 1 \mod p$. 
    
    For a contrarious example, consider $g$ being the identity $\mathrm{id}: \N \longrightarrow \Z$, satisfying $\mathrm{id}(p) = p \equiv 0 \mod p$ for all rational primes $p$. Then $P_n^{\mathrm{id}}(X)$ is related to the Laguerre polynomials and in particular, we have
    \[P_2^{\mathrm{id}}(1) = \frac{1}{2} \left(\mathrm{id}(1) P_1^{\mathrm{id}}(1) + \mathrm{id}(2) P_0^{\mathrm{id}}(1)\right) = \frac{1}{2}\left(1 \cdot \frac{1}{1} \cdot 1 \cdot 1 + 2 \cdot 1\right) = \frac{3}{2}\]
    which is not a rational integer.
\end{rmk}

\subsection{Integers on parallel lines of the real axis}

By virtue of Lemma \ref{lem:IndexIsDetOfMatrix}, the index $\kappa_\alpha$ is given by the absolute value of the determinant of a matrix $M$, sending an integral basis of $\Q(\alpha)$ to the power basis generated by $\alpha$. For the two most commonly treated kinds of number fields, we explicitly apply this in the following lemma.

\begin{lemma}\label{lem:IndexOfaZeta_mAndaOmega_D}
    Let $a, b \in \Z$.
    \begin{enumerate}
        \item If $m \geq 3$, then
        \begin{equation*}
            \kappa_{a\zeta_m + b} = |a|^{\frac{\varphi(m)\left(\varphi(m) - 1\right)}{2}}
        \end{equation*}
        where $\varphi$ is the Euler totient function.
        \item If $D \in \Z$ square-free, then
        \begin{equation*}
            \kappa_{a\omega_D + b} = |a|. 
        \end{equation*}
    \end{enumerate}
\end{lemma}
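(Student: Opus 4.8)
The plan is to apply Lemma 2.10 directly: in each case I exhibit the standard integral basis of the ring of integers, express the power basis of $a\alpha+b$ in terms of it via an explicit integer matrix $M$, and compute $|\det M|$. The translation by $b$ should be harmless — adding a rational integer multiple of lower powers is a unipotent (upper/lower triangular with $1$'s on the diagonal) change of basis — so $\kappa_{a\alpha+b}=\kappa_{a\alpha}$, and I may as well take $b=0$ from the start. Similarly, if $a=0$ the element is a rational integer and both formulas give $0^{\ast}$, so I should state the lemma (or at least read it) for $a\neq 0$; I will note this.

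For part (2), write $\omega=\omega_D$, so $\mathcal{O}_{\Q(\sqrt D)}=\Z[\omega]=\Z\oplus\Z\omega$ with integral basis $\{1,\omega\}$. The power basis of $a\omega$ (after discarding the translation by $b$) is $\{1,a\omega\}$, and the change-of-basis matrix from $\{1,\omega\}$ to $\{1,a\omega\}$ is $\begin{pmatrix}1&0\\0&a\end{pmatrix}$, whose determinant has absolute value $|a|$. Hence $\kappa_{a\omega_D+b}=|a|$ by Lemma 2.10. This case is essentially immediate.

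For part (1), set $\alpha=\zeta_m$, $k=\varphi(m)$, and recall $\mathcal{O}_{\Q(\zeta_m)}=\Z[\zeta_m]$ with integral basis $\{1,\zeta_m,\dots,\zeta_m^{k-1}\}$. The element $a\zeta_m+b$ generates the same field, so $\Z[a\zeta_m+b]$ has $\Q$-basis $\{1,(a\zeta_m+b),\dots,(a\zeta_m+b)^{k-1}\}$. Expanding $(a\zeta_m+b)^j=\sum_{i=0}^{j}\binom{j}{i}b^{\,j-i}a^{i}\zeta_m^{i}$ shows that the change-of-basis matrix $M$ expressing $\{(a\zeta_m+b)^j\}_{j=0}^{k-1}$ in terms of $\{\zeta_m^i\}_{i=0}^{k-1}$ is lower triangular with diagonal entries $\binom{j}{j}b^{0}a^{j}=a^{j}$ for $j=0,\dots,k-1$. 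Therefore
\[
\kappa_{a\zeta_m+b}=|\det M|=\prod_{j=0}^{k-1}|a|^{\,j}=|a|^{\,k(k-1)/2}=|a|^{\frac{\varphi(m)(\varphi(m)-1)}{2}},
\]
again by Lemma 2.10. The one point needing care here is the standard fact $\mathcal{O}_{\Q(\zeta_m)}=\Z[\zeta_m]$ (for all $m$, including $m\not\equiv 2\bmod 4$), which I will simply cite from Neukirch \cite{Neukirch}; likewise $\mathcal{O}_{\Q(\sqrt D)}=\Z[\omega_D]$ is classical.

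There is essentially no hard step — the argument is bookkeeping with a triangular matrix. The only mild subtlety is making sure the \emph{translation} by $b$ genuinely contributes nothing to the determinant: this is because passing from the ordered basis $\{1,a\zeta_m,\dots,(a\zeta_m)^{k-1}\}$ to $\{1,a\zeta_m+b,\dots,(a\zeta_m+b)^{k-1}\}$ is effected by a lower-triangular integer matrix with $1$'s on the diagonal (binomial expansion again), and composing with a unipotent matrix does not change $|\det|$. I would also remark that part (1) with $m$ replaced by a prime power recovers known index computations, and that the quadratic case (2) is the $k=2$ instance of the same triangular-matrix principle applied to a non-power basis $\{1,\omega\}$ rather than $\{1,\sqrt D\}$ when $D\equiv 1\bmod 4$.
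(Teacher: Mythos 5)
Your proof is correct and takes essentially the same route as the paper's: both express the power basis of $a\zeta_m+b$ (resp.\ $a\omega_D+b$) in terms of the standard integral basis of $\Z[\zeta_m]$ (resp.\ $\Z[\omega_D]$) via a lower triangular integer matrix with diagonal entries $1, a, \ldots, a^{\varphi(m)-1}$ and then apply Lemma \ref{lem:IndexIsDetOfMatrix}. Your additional remarks that the translation by $b$ only contributes a unipotent factor and that one should tacitly assume $a \neq 0$ are sensible clarifications, not departures from the paper's argument.
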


\begin{proof}
    We only prove the first item as the second follows in a similar manner. Because of $\deg [\Q(\zeta_m) : \Q] = \varphi(m)$, we have
    \[\Z[a\zeta_m + b] \cong \prod_{j = 0}^{\varphi(m) - 1} (a\zeta_m + b)^j\Z.\]
    Since $\mathcal{O}_{\Q(\zeta_m)} = \Z[\zeta_m]$, we may apply Lemma \ref{lem:IndexIsDetOfMatrix} which yields the claim by Gau\ss's Little Theorem since the corresponding matrix $M$ is a lower triangular matrix with $1, a, \ldots, a^{\varphi(m) - 1}$ as diagonal entries.
\end{proof}

\begin{rmk}
    Neuhauser and the first author \cite{HeimNeu25}*{Lemma 3} showed that, if $\mu \in \Z$ with $p \mid \mu$ for a rational prime $p$, then
    \[p \nmid \kappa_{\zeta_m + \mu\alpha}\]
    for all $\alpha \in \Z[\zeta_m]$. Their proof uses more general algebraic tools but may be translated into this setting.
\end{rmk}

With this in hand, we move on to prove the first of our main results which extends the range of non-roots along parallel lines of the real axis.

\begin{proof}[Proof of Theorem \ref{thm:NonRootsTranslatingWithRationalIntegers}]
    We only prove the second item as the others follow in a similar manner. Due to Lemma \ref{lem:IndexOfaZeta_mAndaOmega_D}, we may apply the Dedekind--Kummer Theorem to $\alpha = a\zeta_m + b$. Since we have $3 \not \equiv 1 \mod m_3$, the minimal polynomial $m_\alpha$ does not decompose into linear factors modulo $3$ and is therefore divisible by a non-linear factor. Invoking Corollary \ref{cor:SplittingIfg(p)IsNice} yields the claim since $A_n^g(X)$ factorizes into linear polynomials and hence $m_\alpha(X) \nmid P_n^g(X)$ locally as well as globally.
\end{proof}

For our second main result, we point out that
\begin{equation}\label{eq:A_1^gA_2^g}
    A_1^g(X) = X
\end{equation}
regardless of the arithmetic function $g$.

\begin{proof}[Proof of Theorem \ref{thm:NotRamifiedThm}]
    Due to $p \nmid a$ and Lemma \ref{lem:IndexOfaZeta_mAndaOmega_D}, we may employ the Dedekind--Kummer Theorem again. Following Lemma \ref{lem:SplittingOfA_n^gModP} and Proposition \ref{prop:SplittingBehaviorOfA_p^g(X)ModP}, we have
    \[A_n^g(X) \equiv X^n \mod p\]
    because $g(p) \equiv 0 \mod p$ and $n \equiv 0, 1 \mod p$, using \eqref{eq:A_1^gA_2^g} in the latter case if necessary. Since $p \nmid D$, we deduce that $p$ does not ramify in $\Q(\omega_D)$, hence $m_{a\omega_D + b}(X) \nmid A_n^g(X)$ modulo $p$ and the first item follows. 

    For the second claim, we deduce that $p$ does not split in $\Q(\omega_D)$ and hence, $m_{a\omega_D + b}(X)$ is an irreducible quadratic polynomial modulo $p$. However, by the same results as above, we have
    \[A_n^g(X) \equiv \left(X\left(X^{p-1} - 1\right)\right)^{\floor{\frac{n}{p}}}X^t \mod p\]
    where $t \in \{0,1\}$ with $n \equiv t \mod p$. Clearly, $m_{a\omega_D + b}(X) \nmid A_n^g(X)$ as desired.
\end{proof}

The above proofs indicate that---not too surprisingly---more knowledge about $g$ pushes our ability to exclude further potential roots. Our main interest still lies with the original case of the Dedekind eta function and the $P_n^\sigma(X)$. Since $\sigma(p) \equiv 1 \mod p$ for every rational prime $p$, it suffices to compute each polynomial up to $A_p^\sigma(X)$ to know the splitting behaviors for each $A_n^\sigma(X)$ modulo $p$. The first few $P_n^\sigma(X)$, resp. $A_n^\sigma(X)$, look like this
\begin{align*}
    A_0^\sigma(X) = 0!P_0^\sigma(X) &= 1,\\
    A_1^\sigma(X) = 1!P_1^\sigma(X) &= X,\\
    A_2^\sigma(X) = 2!P_2^\sigma(X) &= X (X + 3),\\
    A_3^\sigma(X) = 3!P_3^\sigma(X) &= X(X+8)(X+1),\\
    A_4^\sigma(X) = 4!P_4^\sigma(X) &= X (X + 14)(X + 3)(X + 1),\\
    A_5^\sigma(X) = 5!P_5^\sigma(X) &= X (X + 6)(X + 3)(X^2 + 21X + 8),\\
    A_6^\sigma(X) = 6!P_6^\sigma(X) &= X (X + 10)(X + 1)(X^3 + 34X^2 + 181X + 144),
\end{align*}
where each factor is irreducible. Investigating these products further modulo $7$ reveals more non-roots along the imaginary axis.

\begin{proof}[Proof of Theorem \ref{thm:NonRootsOnImaginaryAxisWithSigma}]
    The minimal polynomial of $ai + b$ is given by $m_{ai + b}(X) = (X - b)^2 + a^2$. Suppose $3 \nmid a$, then $P_n^\sigma(ai + b) \neq 0$ by virtue of the second item of Theorem \ref{thm:NonRootsTranslatingWithRationalIntegers} for all $n \in \N$. 
     
    Now assume $3 \mid a$. By our assumption on $a$, it follows $7 \nmid a$ and thus by Lemma \ref{lem:IndexOfaZeta_mAndaOmega_D}, we have $7 \nmid \kappa_{ai + b}$. Then $m_{ai + b}(X)$ does not decompose into irreducible linear polynomials modulo $7$ by the Dedekind--Kummer Theorem since $7 \equiv 3 \mod 4$ but instead remains a quadratic polynomial.
    \begin{enumerate}
        \item If $n \not \equiv 5, 6 \mod 7$, then $A_n^\sigma(X)$ decomposes into irreducible linear polynomials over $\Z$ and hence does so modulo $7$. Thus, $m_{ai + b}(X) \nmid P_n^\sigma(X)$.

        If $n \equiv 6 \mod 7$, then the only non-linear irreducible polynomial in the decomposition of $A_n^\sigma(X)$ modulo 7 is
        \[h_6(X) \equiv X^3 - X^2 - X + 4 \mod 7.\]
        Since $m_{ai +b}(X)$ and $h_6(X)$ are both irreducible, we again deduce $m_{ai+b}(X) \nmid P_n^\sigma(X)$.
        \item If $n \equiv 5 \mod 7$, the only non-linear irreducible polynomial in the decomposition of $A_n^\sigma(X)$ modulo 7 is
        \[h_5(X) \equiv X^2 + 1 \mod 7.\]
        If $7 \nmid b$, then $h_5(X)$ is obviously not associated to $m_{ai + b}(X)$ modulo 7 due to the latter having a non-vanishing linear term. If however $7 \mid b$, then
        \[m_{ai + b}(X) \equiv X^2 + a^2 \mod 7\]
        which is congruent to $h_5(X)$ if and only if $a^2 \equiv 1 \mod 7$.
    \end{enumerate}
    This completes the proof.
\end{proof}

\section{Open challenges}

As pointed out in the last section of \cite{HeimNeu25}, the search for further non-roots of the D'Arcais polynomials may advance in various directions. As this paper was meant to focus on methods to manipulate the application of the Dedekind--Kummer Theorem, entirely different approaches may provide even deeper insights to the structure of the D'Arcais polynomials.

One of them is the investigation of the behavior of $P_n^g(X)$ along certain stretched copies of the unit circle. Let $a \in \R_{\geq 0}$ and
\[\mathbb{U}_a := \{z \in \C : |z| = a\}.\]
What can we say about (non-)roots of $P_n^g(X)$ in $\mathbb{U}_a$?

Since the constant coefficient of $P_n^g(X)$ always vanishes, one may examine $H_n^g(X) := \frac{P_n^g(X)}{X}$. In the case of $g = \sigma$, one finds that $H_n^\sigma(X)$ only has non-negative coefficients. Hence, $H_n^\sigma(x) > 0$ for all non-negative real $x$. A polynomial $f(X)$ is called \emph{Hurwitz} if $f(x) = 0$ implies $\Re (x) < 0$, as was introduced in \cite{Hurw}. Neuhauser and the first author conjectured that the $H_n^\sigma(X)$ are Hurwitz. One might want to apply an algorithm suggested by Holtz \cite{Holtz}.

\section*{Acknowledgements}

We thank Badri Vishal Pandey for valuable suggestions.

\begin{bibsection}
\begin{biblist}

\bib{AGOS}{webpage}{
    author = {Tewodros Amdeberhan},
    author = {Michael Griffin},
    author = {Ken Ono},
    author = {Ajit Singh},
    title = {Traces of Partition Eisenstein Series},
    url = {https://arxiv.org/abs/2408.08807},
    year = {2025}
}

\bib{Conrad}{webpage}{
    author = {Keith Conrad},
    title = {Roots on a circle},
    url = {https://kconrad.math.uconn.edu/blurbs/galoistheory/numbersoncircle.pdf}
}

\bib{Han10}{article}{
    author = {Guo-Niu Han},
    title = {The Nekrasov–Okounkov hook length formula: refinement, elementary proof
    and applications},
    journal = {Ann. Inst. Fourier},
    place = {Grenoble},
    number = {60}, 
    year = {2010}, 
    pages = {1--29}
}

\bib{HeimLucaNeu}{article}{
    author = {Bernhard Heim},
    author = {Florian Luca},
    author = {Markus Neuhauser},
    title = {On cyclotomic factors of polynomials related to
    modular forms},
    journal = {The Ramanujan Journal},
    number = {48},
    year = {2019},
    pages = {445--458}
}

\bib{HeimNeu24}{article}{
    author = {Bernhard Heim},
    author = {Markus Neuhauser},
    title = {Estimate for the largest zeros of the D’Arcais polynomials},
    journal = {Res. Math. Sci.},
    number = {11},
    year = {2024},
    pages = {10 pp.}
}

\bib{HeimNeu25}{webpage}{
    author = {Bernhad Heim},
    author = {Markus Neuhauser},
    title = {On the Non-vanishing of the D'Arcais Polynomials},
    url = {https://arxiv.org/abs/2509.06123}
}

\bib{Holtz}{webpage}{
    author = {Olga Holtz},
    year = {2005},
    title = {Hermite--Biehler, Routh--Hurwitz, and total positivity},
    url = {https://arxiv.org/abs/math/0512591}
}

\bib{Hurw}{article}{
    author = {Adolf Hurwitz},
    year = {1895},
    title = {Ueber die Bedingungen, unter welchen eine Gleichung nur Wurzeln mit negativen reellen Theilen besitzt},
    journal = {Math. Ann.},
    volume = {46},
    pages = {273--284},
    doi = {10.1007/BF01446812}
}

\bib{Koehler}{book}{
    author = {G. Koehler},
    title = {Eta Products and Theta Series Identities},
    publisher = {Springer},
    place = {Berlin},
    year = {2011}
}

\bib{Kostant}{article}{
    author = {Bertram Kostant},
    title = {Powers of the Euler product and commutative subalgebras of a complex
    simple Lie algebra},
    journal = {Invent. Math.},
    number = {158},
    year = {2004},
    pages = {181--226}
}

\bib{Lehmer}{article}{
    author = {Derrick H. Lehmer},
    title = {The vanishing of Ramanujan's function $\tau(n)$},
    journal = {Duke Math. Journal},
    number = {14},
    year = {1947},
    pages = {429--433}
}

\bib{Murty}{book}{
    author = {M. Ram Murty},
    author = {V. Kumar Murty},
    title = {The Mathematical Legacy of Srinivasa Ramanujan},
    year = {2013},
    publisher = {Springer India},
    place = {New Delhi}
}

\bib{Neukirch}{book}{
    author = {J\"urgen Neukirch},
    title = {Algebraische Zahlentheorie},
    year = {1992},
    publisher = {Springer},
    place = {Berlin}
}

\bib{On03}{book}{
    author = {Ken Ono},
    title = {The Web of Modularity: Arithmetic of the Coefficients of Modular Forms and $q$-series},
    publisher = {American Mathematical Society},
    place = {Providence},
    year = {2003},
    series = {CBMS Regional Conference Series in Mathematics},
    volume = {102}
}

\bib{Stanley}{book}{
    author = {Richard P. Stanley},
    title = {Enumerative Combinatorics},
    volume = {2},
    publisher = {Cambridge Univ. Press},
    year = {1999},
    series = {Cambridge Studies in Advanced Mathematics},
    issue = {62}
}

\bib{SteTall}{book}{
    author = {Ian Stewart},
    author = {David Tall},
    title = {Algebraic Number Theory and Fermat's Last Theorem},
    publisher = {CRC Press},
    edition = {4th},
    year = {2015},
    place = {New York}
}

\bib{Zmija}{article}{
    author = {B\l{}a\.{z}ej \.{Z}mija},
    title = {Unusual class of polynomials related to partitions},
    journal = {The Ramanujan Journal},
    number = {62},
    year = {2023}
}
\end{biblist}
\end{bibsection}

\end{document}